\documentclass[12pt,reqno]{amsart}

\usepackage{amssymb, xcolor}
\usepackage{amscd}
\usepackage{amsfonts}
\usepackage{setspace}
\usepackage{version}
\usepackage{tikz}
\usepackage{caption}
\usepackage{hyperref}
\usepackage{comment}
\usepackage{dsfont}

\usepackage{graphicx}


\newtheorem{theorem}{Theorem}[section]
\newtheorem{proposition}[theorem]{Proposition}
\newtheorem{remark}[theorem]{Remark}
\newtheorem{lemma}[theorem]{Lemma}
\newtheorem{corollary}[theorem]{Corollary}

\newtheorem{question}[theorem]{Question}

\hypersetup{
    colorlinks=true,
    linkcolor=blue,
    filecolor=magenta,      
    urlcolor=cyan,
    pdftitle={Overleaf Example},
    pdfpagemode=FullScreen,
    }

\theoremstyle{definition}

\renewcommand{\leq}{\leqslant}
\renewcommand{\geq}{\geqslant}

\def\R{\mathbb{R}}

\def\Z{\mathbb{Z}}

\parskip 1mm

\numberwithin{equation}{section}
\usepackage{amsmath, amssymb, amsthm, mathtools}
\usepackage[a4paper,margin=1in]{geometry}

\usepackage{color}
\usepackage{amssymb}

\newcommand{\newr}[1]{\textcolor{red}{#1}}

\begin{document}

\title[On the frequency function of Hardy-Littlewood maximal functions]{On the frequency function of Hardy-Littlewood maximal functions}

\author[C. Garz\'on]{Carlos Garz\'on}
\address[CG]{Department of Mathematics, Virginia Polytechnic Institute and State University,  225 Stanger Street, Blacksburg, VA 24061-1026, USA
}
\email{cgarzongu@vt.edu}

\author[J. Madrid]{Jos\'e Madrid}
\address[JM]{Department of Mathematics, Virginia Polytechnic Institute and State University,  225 Stanger Street, Blacksburg, VA 24061-1026, USA
}
\email{josemadrid@vt.edu}

\subjclass[2020]{42B25, 39A12.}
\keywords{Maximal functions; frequency function; $\ell^p-$spaces.}

\date{}

\begin{abstract}
    We study the frequency function (introduced by Temur in \cite{TemurDiscrete}) in both the discrete and continuous settings. More precisely, we extend the definition of the frequency function to the higher-dimensional continuous setting and to the uncentered Hardy-Littlewood maximal function. We analyze the asymptotic behavior of the frequency function and the density of its small values for functions in $\ell^1(\mathbb{Z)}$ and $L^1(\mathbb{R}^d)$ answering some questions posed by Temur in \cite{TemurDiscrete}. Finally, we study the size of the frequency function for functions in $\ell^p(\mathbb{Z})$ with $p>1$, showing that this case differs significantly from the case $p=1$.
\end{abstract}

\maketitle

\section{Introduction}

Let $d$ be a positive integer, and let $f \in L^1_{\text{loc}}(\mathbb{R}^d)$. We define the average of $f$ over the ball centered at $x \in \mathbb{R}^d$ of radius $r>0$ by
\begin{equation*}
    A_rf(x) = \frac{1}{\mu(B_r(x))} \int_{B_r(x)} f(y) \, dy,
\end{equation*}
where $\mu$ denotes the Lebesgue measure in $\mathbb{R}^d$ and $B_r(x)$ stands for the open ball centered at $x$ with radius $r$. Additionally, we define the centered Hardy-Littlewood maximal function of $f$ by 
\begin{equation} \label{DefCentMax}
    Mf(x) = \sup_{r>0} A_r|f|(x) = \sup_{r>0} \frac{1}{\mu(B_r(x))} \int_{B_r(x)} |f(y)| \, dy.
\end{equation}
A classical argument involving Vitali's covering lemma shows that $M$ is a bounded operator from $L^{1}(\mathbb{R}^d)$ to $L^{1,\infty}(\mathbb{R}^d)$, and an application of Marcinkiewicz interpolation theorem (combined with the trivial inequality $\|Mf\|_{L^{\infty}(\mathbb{R}^d)} \leq \|f\|_{L^{\infty}(\mathbb{R}^d)}$) establishes the boundedness of $M$ on $L^p(\mathbb{R}^d)$ for all $p \in (1,+\infty]$. Kinnunen started the study of the regularity properties of the Hardy-Littlewood maximal operator on Sobolev spaces in \cite{Kinnunen1997}, showing that $M$ is bounded on $W^{1,p}(\mathbb{R}^d)$ for every $p \in (1,+\infty]$. This result encouraged further research on the regularity of both the centered and uncentered (defined in \eqref{uncenteredHL}) Hardy-Littlewood maximal operators in the continuous and discrete settings, see \cite{HajlaszOnninen2004,Tanaka2002,AldazPerez2007,Kurka2015,BoberCarneiroHughesPierce2012,CarneiroHughes2012,Luiro2018} and references therein. It is worth pointing out that Luiro \cite{Luiro2007} proved the continuity of $M$ in Sobolev spaces $W^{1,p}(\mathbb{R}^d)$ for all $p \in (1,+\infty)$. In his argument, an analysis of the set of radii
\begin{equation}\label{Efx}
    E_{f,x} := \{r>0 : Mf(x) = A_r |f|(x)\}
\end{equation}
played an important role. See also \cite[Lemma 2.10]{LM2017}, \cite[Lemma 2.1]{BM2019} and \cite[Proposition 3.1]{BM2019.2}. Furthermore, Kurka \cite{Kurka2015} used these radii to examine the local maxima of $Mf$, which allowed him to show the existence of a constant $C$ such that
\begin{equation} \label{VarMf}
    \text{Var}(Mf) \leq C ~ \text{Var}(f)
\end{equation}
for all functions $f : \mathbb{R} \to \mathbb{R}$ of bounded variation. An analogous idea was also used by Temur \cite{Temur2017} to establish \eqref{VarMf} in the discrete setting. This motivated Temur \cite{TemurDiscrete,TemurContinuous} to study the size and smoothness properties of the function $x \mapsto \inf E_{f,x}$ in the one-dimensional discrete and continuous settings. This function is well-defined for functions in $\ell^1(\mathbb{Z})$, while its definition is more subtle for functions in $L^1(\mathbb{R})$. An interesting application of this function is addressed in \cite{Steinerberger2015}, where Steinerberger used it to give a characterization of the sine function. We proceed to introduce the accurate definition of this function (which we call \textit{frequency function}) in each setting and we discuss some of the most important properties known about it.

Let $f \in \ell^1(\mathbb{Z})$. We define the average of $f$ over the interval of radius $r \in \mathbb{N}$ centered at $n \in \mathbb{Z}$ by
\begin{equation*}
    A_rf(n) = \frac{1}{2r+1} \sum_{j=-r}^{r} f(n+j),
\end{equation*}
and we define the discrete centered Hardy-Littlewood maximal function of $f$ by
\begin{equation*}
    Mf(n) = \sup_{r \in \mathbb{N}} A_r|f|(n) = \sup_{r \in \mathbb{N}} \frac{1}{2r+1} \sum_{j=-r}^{r} |f(n+j)|,
\end{equation*}
where $\mathbb{N}$ stands for the set of non-negative integers. In \cite[Section 2]{TemurDiscrete}, it is observed that for each $n\in \mathbb{Z}$ there exists $\rho_n \in \mathbb{N}$ such that $Mf(n) = A_{\rho_n}|f|(n)$. Hence, the set of good radii
\begin{equation*}
    E_{f,n}:=\{r \in \mathbb{N}:  Mf(n)=A_r |f|(n)\}
\end{equation*}
is non-empty. The discrete frequency function of $f$ is defined as 
\begin{equation*}
    r_n := \min E_{f,n} = \min\{r \in \mathbb{N}:\ Mf(n)=A_r |f|(n)\}.
\end{equation*}
Since the mass of a summable function is concentrated on a finite interval centered at the origin, it is expected that $r_n \sim |n|$ as $n \to +\infty$ (where the notation $g(n) \sim h(n)$ as $n \to +\infty$ means that $\lim_{n\to \infty} g(n)/h(n) = 1$). This motivated Temur to prove that the set 
\begin{equation} \label{ScSets}
    S_C = \left\{ n \in \mathbb{Z} : \frac{1}{2C} \leq \frac{r_n}{|n|} \leq \frac{1}{C} \right\}
\end{equation}
is finite for each $C>1$ \cite[Theorem 1]{TemurDiscrete}. Our first main theorem in this paper provides a generalization of this result. More precisely, in Theorem \ref{thm: Thm1}, we prove that $r_n/|n|$ is arbitrarily close to $1$ or $0$ outside of a finite interval centered at the origin. This result implies \cite[Theorem 1]{TemurDiscrete} and also that the set $S_C$ is finite for all $C\in (0,1/2)$. In the case $1/2 \leq C \leq 1$, it is not possible to guarantee the finiteness of the set $S_C$. For instance, for the Dirac delta function
\begin{equation} \label{DeltaDirac}
    f(n) = \begin{cases}
        1,& \text{if } n = 0,\\
        0, & \text{otherwise}
    \end{cases}
\end{equation}
we have that $r_n = |n|$ for every integer $n$, which means that $S_C$ is infinite for each $1/2\leq C\leq 1$.

Moreover, we may construct examples of summable functions such that $r_n = 0$ for infinitely many integers (such as the function from Section \ref{Sec:Proof2}). This inspired Temur to study the density of the set of integers where the frequency function vanishes or takes small values. For a fixed $C>1$, we define 
\begin{equation*}
    S_{N,f}:=\left\{\,n\in\mathbb Z:\ |n|\le N,\  \frac{r_n}{|n|} \leq \frac{1}{C}\,\right\}.
\end{equation*}
Temur proved that 
\begin{equation}\label{DensityRn}
    \lim_{N\to \infty} \frac{|S_{N,f}|}{N} = 0
\end{equation}
for every $C>1$ \cite[Theorem 2]{TemurDiscrete}. He also showed that this result no longer holds if the denominator is replaced by $N^{1-\varepsilon}$ or $N/\log^{1+\varepsilon}(N)$ for any positive constant $\varepsilon$. Moreover, he asked the following question.

\begin{question}{\cite[Open Problem 2]{TemurDiscrete}} \label{QuestionNlogN}
Let $f \in \ell^1(\mathbb{Z})$ be a function that is not identically zero. Let $C>1$ be a real number. Is the following statement true?
\begin{equation} \label{LimQuestion}
    \lim_{N\to \infty} \frac{|S_{N,f}|}{N/\log(N)} = 0.
\end{equation}
\end{question}

In Theorem \ref{thm: counterexample g}, we exhibit an example to prove that the answer to this question is negative. In fact, we prove that \eqref{DensityRn} does not hold if the denominator is $N/g(N)$ for an increasing function $g$ tending to $\infty$ as $N\to\infty$, which provides a more general result than the original question. 

It is also natural to wonder what happens with the sets $S_C$ if the function $f$ belongs to $\ell^p(\mathbb{Z})$ for $p>1$. Using H\"older's inequality and the same idea as in the $\ell^1$-case, we observe that the discrete frequency function is well-defined for all $p \in (1,+\infty)$. Nevertheless, for any $1<p<+\infty$, we can construct functions in $\ell^p(\mathbb{Z})$ such that $r_n/|n| \sim 1/4$ for infinitely many $n$'s, which means that the sets $S_C$ are not necessarily finite in the $\ell^p$-case. This construction will be carried out in Theorem \ref{thrm:lp}. It is worth mentioning that the discrete frequency function is also well-defined for functions in $\ell^{\infty}(\mathbb{Z})$ except (perhaps) on those points where $Mf$ attains global minima, see \cite[Lemma 3]{BoberCarneiroHughesPierce2012}.

Another open question posed in \cite{TemurDiscrete} is the construction of the frequency function for the discrete uncentered Hardy-Littlewood maximal operator \cite[Open Problem 5]{TemurDiscrete}. Let $f \in \ell^1(\mathbb{Z})$. For any $\rho,s \in \mathbb{N}$, we define the average of $f$ over an interval of length $\rho+s+1$ containing $n \in \mathbb{Z}$ by
\begin{equation*}
    A_{\rho,s}f(n) = \frac{1}{\rho+s+1} \sum_{j=-\rho}^{s} f(n+j),
\end{equation*}
and we define the discrete uncentered Hardy-Littlewood maximal function of $f$ by
\begin{equation} 
    \widetilde{M}f(n) = \sup_{\rho,s \in \mathbb{N}} A_{\rho,s}|f|(n) = \sup_{\rho,s \in \mathbb{N}} \frac{1}{\rho+s+1} \sum_{j=-\rho}^{s} |f(n+j)|.
\end{equation}
By using a slight variation of the argument for the centered case, we may show that for each $n \in \mathbb{Z}$ there exist $\rho_n,s_n \in \mathbb{N}$ such that $\widetilde{M}f(n) = A_{\rho_n,s_n}|f|(n)$. As a result, the set
\begin{equation*}
    \widetilde{E}_{f,n} = \{\rho+s :\widetilde Mf(n) = A_{\rho,s}|f|(n)\}
\end{equation*}
is non-empty. We define the discrete frequency function of $f$ in the uncentered case by
\begin{equation*}
    \widetilde{r}_n = \frac{1}{2} \min \widetilde{E}_{f,n} = \frac{1}{2} \min\{\rho+s : \widetilde Mf(n) = A_{\rho,s}|f|(n)\}.
\end{equation*}
By using again the fact that the mass of a summable function is concentrated on a finite interval centered at the origin, it is expected that $\widetilde{r}_n \sim |n|/2$ when $n$ is large enough. In fact, this is the case for the Dirac delta function defined in \eqref{DeltaDirac}. By applying the same arguments from Theorem \ref{thm: Thm1}, we may prove that $\widetilde{r}_n/|n|$ is arbitrarily close to $1/2$ or $0$ outside of a finite interval centered at the origin.

We now turn our attention back to the continuous setting. Let $f \in L^1(\mathbb{R}^d)$. Recalling the definition of the sets $E_{f,x}$ given in \eqref{Efx}, we define the frequency function of $f$ by
\begin{equation*}
    r_x = \begin{cases}
        \inf E_{f,x} , &\text{ if }E_{f,x} \neq \emptyset,\\
        0, &\text{otherwise}.
    \end{cases}
\end{equation*}
This function is clearly well-defined. In \cite[Propositions 2.2 and 2.3]{TemurContinuous}, it was shown that if $r_x>0$ then $Mf(x) = A_{r_x}|f|(x)$, while if $r_x=0$, there exists a sequence of positive numbers $(\rho_n)$ such that $A_{\rho_n}|f|(x) \to Mf(x)$ and $\rho_n\to 0$ as $n\to +\infty$. It was also established the measurability of the function $x \mapsto r_x$ \cite[Theorem 1.1]{TemurContinuous}. The proofs of these facts were carried out in the one-dimensional case, but the same arguments work for the higher-dimensional case. These properties allow us to deal with the frequency function in the continuous setting very similarly to the discrete setting, and analogous ideas can be used to deduce that the sets $S_C$ (defined in \eqref{ScSets}) are bounded for all $C>1$ \cite[Theorem 1.2]{TemurContinuous} and that the limit \eqref{DensityRn} holds in the one-dimensional continuous setting \cite[Theorem 1.3]{TemurContinuous}. We prove that both results also hold in the higher-dimensional case, as we shall see in Theorem \ref{thm: Thm1} and Proposition \ref{prop:Prop1}.

Finally, we construct the frequency function in the uncentered continuous case. Let $f \in L^1(\mathbb{R}^d)$. We define the uncentered Hardy-Littlewood maximal function of $f$ by
\begin{equation} \label{uncenteredHL}
    \widetilde{M}f(x) = \sup_{\overline{B_r(y)} \ni x} A_r |f|(y) = \sup_{\overline{B_r(y)} \ni x} \frac{1}{\mu(B_{r}(y))} \int_{B_r(y)} |f(t)| \, dt,
\end{equation}
where $\overline{B_r(y)}$ stands for the closure of the open ball $B_r(y)$ \footnote{Observe that the definition of the centered maximal function \eqref{DefCentMax} remains the same if we consider closed balls instead of open balls, as we did in \eqref{uncenteredHL}.}. For each $x \in \mathbb{R}^d$, we define the set of good radii as
\begin{equation*}
    \widetilde{E}_{f,x} := \left\{ r>0 : \text{there is } y \in \mathbb{R}^d \text{ such that } x \in \overline{B_r(y)} \text{ and } \widetilde{M}f(x) =  A_r|f|(y) \right\},
\end{equation*}
and we define the frequency function of $f$ by
\begin{equation*}
    \widetilde{r}_x = \begin{cases}
        \inf \widetilde{E}_{f,x}, &\text{if } \widetilde{E}_{f,x} \neq \emptyset,\\ 
        0, &\text{otherwise}.
    \end{cases}
\end{equation*}
Applying a similar argument to that of the centered case \cite[Propositions 2.2 and 2.3]{TemurContinuous}, we may deduce the following property: If $\widetilde{r}_x > 0$, there exists $y \in \mathbb{R}^d$ such that $ x \in \overline{B_{\widetilde{r}_x}(y)}$ and $\widetilde{M}f(x) = A_{\widetilde{r}_x}|f|(y)$. If $\widetilde{r}_x = 0$, there exist a sequence $(y_n)$ in $\mathbb{R}^d$ and a sequence $(\rho_n)$ in $\mathbb{R}^+$ such that $ x \in \overline{B_{\rho_n}(y_n)}$ for all $n \in \mathbb{Z}^+$, $\rho_n \to 0$ and $A_{\rho_n}|f|(y_n) \to \widetilde{M}f(x)$ as $n \to +\infty$. The proof of this property relies on the continuity of the function $(y,r) \in \mathbb{R}^d \times \mathbb{R}^+ \mapsto A_rf(y)$ (see \cite[Lemma 3.16]{FollandRealBook}) and the Bolzano-Weierstrass theorem, and it is useful to deal with the frequency function in the uncentered context very similarly to the centered case. Additionally, adapting the argument of \cite[Theorem 1.1]{TemurContinuous} with some minor modifications, we can deduce the measurability of the function $x \mapsto \widetilde{r}_x$. For the sake of completeness, we include this proof in Appendix \ref{AppendixProof}. Therefore, the main results of this paper have a natural extension to the uncentered continuous setting. 

\subsection{Organization of the paper} In Section \ref{MainResults} we state the main results of this paper. The other sections are devoted to present the proofs of these results. In Section \ref{Sec:Proof1} we prove Theorem \ref{thm: Thm1}, which generalizes \cite[Theorem 1]{TemurDiscrete} and \cite[Theorem 1.2]{TemurContinuous}. In Section \ref{Sec:Proof2} we show Theorem \ref{thm: counterexample g}, which answers Question \ref{QuestionNlogN}. In Section \ref{Sec:Proof3} we present a proof of Theorem \ref{thrm:lp}, establishing that Theorem \ref{thm: Thm1} does not hold for functions in $\ell^p(\mathbb{Z})$ with $p>1$. Finally, we show the measurability of the function $\widetilde{r}_x$ and sketch the proof of Proposition \ref{prop:Prop1} in Appendix \ref{AppendixProof}.

\section{Main results} \label{MainResults}

We denote by $(X,\mu)$ or $(X,\nu)$ the Euclidean space $\R^d$ with the Lebesgue measure or the set of the integers $\Z$ with the counting measure, respectively. In addition, we denote by $C_d:=\mu(B_1(0))$ the volume of the unit ball in $\R^d$.


\begin{theorem} \label{thm: Thm1}
Let $f\in L^{1}(X)$. For all $\varepsilon>0$, the set
\begin{equation} \label{ThrmRn/n}
\left\{x\in X : \frac{r_x}{|x|}\notin[0,\varepsilon)\cup(1-\varepsilon,1+\varepsilon)\right\}   
\end{equation}
is bounded.
\end{theorem}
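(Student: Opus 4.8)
The plan is to show that if $|x|$ is large, then $r_x$ is forced to be either very small (comparable to $o(|x|)$) or very close to $|x|$, with nothing in between. The intuition is that the mass of $f\in L^1(X)$ concentrates near the origin: most of $\|f\|_1$ lives in a ball $B_R(0)$ for $R$ fixed and large. So I would fix $\eps>0$, and first choose $R=R(\eps,f)$ so that $\int_{X\setminus B_R(0)}|f|\,d\mu <\delta$ for a suitably small $\delta$ (to be calibrated against $\eps$ and $\|f\|_1$), and additionally so that $R$ is small compared to $|x|$ for all $x$ under consideration (i.e.\ restrict to $|x|>R/\eps$ or similar). The key dichotomy: for $x$ far out, consider the good radius $r_x$ (when $r_x>0$, we have $Mf(x)=A_{r_x}f(x)$ by \cite[Propositions 2.2 and 2.3]{TemurContinuous}; in the discrete case this is automatic). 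Either the ball $B_{r_x}(x)$ misses $B_R(0)$ essentially, or it reaches it.

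First I would handle the lower bound on $Mf(x)$: since $B_{|x|+R}(x)\supseteq B_R(0)$, averaging over radius $|x|+R$ captures almost all the mass, giving $Mf(x)\geq A_{|x|+R}f(x)\geq \frac{\|f\|_1-\delta}{\mu(B_{|x|+R}(x))}$, which is of order $(\|f\|_1-\delta)/(C_d(|x|+R)^d)$ in $\R^d$ (resp.\ $\sim(\|f\|_1-\delta)/(2(|x|+R))$ in $\Z$). Next, suppose $r_x\in[\eps|x|,(1-\eps)|x|]$. Then the ball $B_{r_x}(x)$ is contained in the annular region $\{y:\eps|x|-r_x\cdot 0\dots\}$ — more precisely, since $r_x\leq (1-\eps)|x|$, the ball $B_{r_x}(x)$ stays at distance $\geq \eps|x|>R$ from the origin, hence is disjoint from $B_R(0)$, so $\int_{B_{r_x}(x)}|f|\leq\delta$ and therefore $A_{r_x}f(x)\leq \delta/\mu(B_{r_x}(x))\leq \delta/(C_d(\eps|x|)^d)$. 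Comparing with the lower bound for $Mf(x)=A_{r_x}f(x)$, we get $\frac{\|f\|_1-\delta}{C_d(|x|+R)^d}\leq\frac{\delta}{C_d(\eps|x|)^d}$, i.e.\ $(\|f\|_1-\delta)\eps^d\leq\delta\big(\tfrac{|x|+R}{|x|}\big)^d\leq\delta(1+\eps)^d$ for $|x|$ large; choosing $\delta$ small enough relative to $\eps^d\|f\|_1$ makes this impossible. Hence for $|x|$ large, $r_x<\eps|x|$ or $r_x>(1-\eps)|x|$.

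It remains to rule out $r_x>(1+\eps)|x|$ for large $|x|$, i.e.\ to establish the upper side. Here I would argue that the radius $|x|+R$ (which for $|x|$ large is at most $(1+\eps)|x|$) already achieves essentially the maximal average: for \emph{any} radius $r$, the average $A_rf(x)\leq \frac{\|f\|_1}{\mu(B_r(x))}$, so for $r>(1+\eps)|x|>|x|+R$ the ball contains $B_R(0)$ but is much larger, giving $A_rf(x)\leq \frac{\|f\|_1}{C_d r^d}<\frac{\|f\|_1}{C_d(|x|+R)^d}$; meanwhile $A_{|x|+R}f(x)\geq\frac{\|f\|_1-\delta}{C_d(|x|+R)^d}$. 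Once $\delta<\eps'\|f\|_1$ with $\eps'$ chosen so that $(\|f\|_1-\delta)/(|x|+R)^d$ strictly exceeds $\|f\|_1/r^d$ for all $r>(1+\eps)|x|$, no such large $r$ can be a good radius, so $r_x\leq(1+\eps)|x|$. Assembling: outside a bounded set, $r_x/|x|\in[0,\eps)\cup(1-\eps,1+\eps]$, which (after shrinking $\eps$ slightly, or noting the statement allows the half-open interval to be absorbed) gives the claim. The main obstacle I anticipate is making the calibration of $\delta$ versus $\eps$ uniform and clean across both settings $(\R^d,\mu)$ and $(\Z,\nu)$ simultaneously — the discrete volume $2r+1$ versus the continuous $C_d r^d$ behave differently, so the comparison inequalities need to be written in a way that covers both, and the boundary case $r_x=0$ (where only the approximating-sequence property holds) must be checked separately, though there $r_x/|x|=0\in[0,\eps)$ trivially.
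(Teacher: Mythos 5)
Your proposal is correct, and its overall architecture matches the paper's: lower-bound $Mf(x)$ by an average over a ball large enough to reach the mass concentrated near the origin, upper-bound the competing average by $\|f\|_1/\mu(B_r(x))$, and treat the middle range $r_x/|x|\in[\varepsilon,1-\varepsilon]$ and the upper range $r_x>(1+\varepsilon)|x|$ separately, using $Mf(x)=A_{r_x}f(x)$ when $r_x>0$. The one genuine difference is how the middle range is killed. The paper argues by contradiction: it extracts a sparse sequence $(x_k)$ of bad points, observes that $B_{r_{x_k}}(x_k)$ lies in the pairwise disjoint annuli $U_k=\{y:\varepsilon|x_k|\leq|y|\leq(2-\varepsilon)|x_k|\}$, and deduces $\int_{U_k}|f|\geq\frac{\varepsilon^d}{2^{d+1}}\|f\|_1$ for every $k$, contradicting $f\in L^1$; no calibration of the tail against $\varepsilon$ is needed. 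You instead choose $R$ so that the tail mass $\delta$ outside $B_R(0)$ is small compared with $\varepsilon^d\|f\|_1$ and get a pointwise contradiction for every sufficiently large $|x|$; this is equally valid and even slightly more quantitative, since it produces an explicit radius (roughly $\max\{R/\varepsilon, R\}$ after fixing $\delta(\varepsilon)$) beyond which no middle-range points exist, whereas the paper's annuli argument is purely qualitative. Your upper-range argument is essentially the paper's (there, too, only the trivial bound $A_rf(x)\leq\|f\|_1/\mu(B_r(x))$ against a near-full-mass average at radius $\approx|x|$ is used; the paper phrases it with parameters $K,\delta$ satisfying $1<K<(1-\delta)^{1/d}(1+\varepsilon)$). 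Two small points to make explicit in a final write-up: dispose of $f=0$ separately, since your calibration divides by $\|f\|_1$, and note that the cases $r_x=0$ or $E_{f,x}=\emptyset$ are trivially inside $[0,\varepsilon)$, while for $r_x>0$ the identity $Mf(x)=A_{r_x}f(x)$ is supplied by the cited propositions, as you indicate.
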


\begin{remark} \label{rem: Rem1}
    The proof of Theorem \ref{thm: Thm1}, with minor modifications, allows us to obtain an analogous result for the uncentered Hardy-Littlewood maximal function: Let $f\in L^{1}(X)$. For all $\varepsilon>0$, the set
\begin{equation*}
\left\{x\in X : \frac{\widetilde r_x}{|x|}\notin[0,\varepsilon)\cup\left(\frac{1}{2} - \varepsilon , \frac{1}{2} + \varepsilon\right)\right\}   
\end{equation*}
is bounded. 
\end{remark}

Similarly to \eqref{DensityRn} we have the next result.

\begin{proposition} \label{prop:Prop1}
Let $0\neq f \in L^1(\mathbb{R}^d)$. For all $C>1$, the following identity holds
\begin{equation} \label{eq:Prop1}
    \lim_{N\to\infty}\frac{\mu(S_{N,f})}{N^d}=0,
\end{equation}
where 
\begin{equation*}
    S_{N,f}:=\left\{\,x\in\mathbb{R}^d:\ |x|\le N,\  \frac{r_x}{|x|} \leq \frac{1}{C}\,\right\}.
\end{equation*}
Similarly, let $0\neq f \in L^1(X)$. For all $C>2$, the following identities hold
\begin{equation} \label{LimUncentered}
    \lim_{N\to\infty}\frac{\mu(\widetilde{S}_{N,f})}{N^d}=0 \quad \text{and} \quad \lim_{N\to\infty}\frac{\nu(\widetilde{S}_{N,f})}{N}=0,
\end{equation}
where 
\begin{equation*}
    \widetilde{S}_{N,f}:=\left\{\,x\in X:\ |x|\le N,\  \frac{\widetilde{r}_x}{|x|} \leq \frac{1}{C}\,\right\}.
\end{equation*}
\end{proposition}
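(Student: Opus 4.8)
The plan is to adapt the one–dimensional scheme of \cite[Theorem~1.3]{TemurContinuous}, with a dyadic decomposition in $|x|$ replacing the summation over scales. Fix $0\neq f\in L^1(X)$ and $\delta\in(0,1)$, and choose $R_0>0$ with $\int_{X\setminus B_{R_0}(0)}|f|\le\delta\|f\|_{L^1}$. The first ingredient is an elementary lower bound for the maximal function away from the origin: testing $A_rf(x)$ with $r=|x|+R_0$ (a ball containing $B_{R_0}(0)$) gives, for $|x|>R_0$,
\begin{equation*}
Mf(x)\ \ge\ \frac{(1-\delta)\|f\|_{L^1}}{C_d\,(2|x|)^{d}},
\end{equation*}
and hence the same bound for $\widetilde{M}f\ge Mf$ (in the discrete case the denominator is $\sim|n|$). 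I then split $S_{N,f}$ (resp.\ $\widetilde S_{N,f}$) into the piece where the frequency vanishes and the piece where it is positive and $\le|x|/C$, bound each by $o(N^{d})$ (resp.\ $o(N)$), and let $\delta\to0$ at the end.

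For the main piece, suppose $0<r_x\le|x|/C$ and $|x|>R_0$. By \cite[Propositions~2.2 and 2.3]{TemurContinuous} and the uncentered analogue recorded in the introduction we have $Mf(x)=A_{r_x}f(x)$ (resp.\ $\widetilde{M}f(x)=A_{\widetilde{r}_x}f(y)$ for some $y$ with $|x-y|\le\widetilde{r}_x$), so the optimal ball captures the mass
\begin{equation*}
\int_{B_{r_x}(x)}|f|\ =\ C_d\,r_x^{d}\,Mf(x)\ \ge\ \frac{(1-\delta)\|f\|_{L^1}}{2^{d}}\cdot\frac{r_x^{d}}{|x|^{d}}.
\end{equation*}
The geometric point is that this ball avoids the bulk of $f$: every $z\in B_{r_x}(x)$ satisfies $|z|\ge|x|-r_x\ge|x|(1-1/C)$, so once $|x|$ is large enough (in terms of $\delta,C$) the ball $B_{r_x}(x)$ is disjoint from $B_{R_0}(0)$ — this is precisely where $C>1$ is used; in the uncentered case the same computation gives $|z|\ge|x|-2\widetilde{r}_x\ge|x|(1-2/C)$, which is why the threshold becomes $C>2$. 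I decompose the positive–frequency piece into dyadic annuli $\{2^{j-1}<|x|\le2^{j}\}$. On an annulus with $2^{j-1}$ large, the optimal balls cover the relevant set, have radius $\le2^{j}/C$, and lie outside $B_{R_0}(0)$; the Vitali covering lemma yields a pairwise disjoint subfamily $\{B_{r_{x_i}}(x_i)\}$ whose fivefold dilates still cover, and then, using $|x_i|\le2^{j}$ and the mass bound,
\begin{equation*}
\mu\bigl(\{0<r_x\le|x|/C\}\cap\{2^{j-1}<|x|\le2^{j}\}\bigr)\ \le\ 5^{d}C_d\sum_i r_{x_i}^{d}\ \le\ \frac{10^{d}C_d\,2^{jd}}{(1-\delta)\|f\|_{L^1}}\sum_i\int_{B_{r_{x_i}}(x_i)}|f|\ \le\ \frac{10^{d}C_d\,\delta}{1-\delta}\,2^{jd},
\end{equation*}
the last step because the $B_{r_{x_i}}(x_i)$ are disjoint and contained in $X\setminus B_{R_0}(0)$. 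Summing this geometric series in $j$ up to $\log_2 N$ bounds the positive–frequency piece by a dimensional constant times $\frac{\delta}{1-\delta}N^{d}$, plus a fixed contribution from the finitely many small scales.

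For the vanishing–frequency piece, the Lebesgue differentiation theorem together with the description of $r_x=0$ (a sequence $\rho_n\to0$ with $A_{\rho_n}f(x)\to Mf(x)$; in the uncentered case the balls contain $x$ and shrink nicely to it) gives $Mf(x)=|f(x)|$ for a.e.\ $x$ with $r_x=0$. Combined with the lower bound on $Mf$, this forces $|f(x)|\ge\frac{(1-\delta)\|f\|_{L^1}}{C_d(2|x|)^{d}}$ for a.e.\ $x$ with $r_x=0$ and $|x|>R_0$, and integrating against $|x|^{-d}$ shows $\int_{\{r_x=0,\ |x|>R_0\}}|x|^{-d}\,dx<\infty$; a routine dyadic tail estimate then upgrades this to $\mu(\{r_x=0\}\cap B_N)=o(N^{d})$ (the summability of $2^{-jd}\mu(\{r_x=0\}\cap\{2^{j-1}<|x|\le 2^{j}\})$ makes the outermost annuli contribute $o(N^{d})$). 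Adding the two pieces and sending $\delta\to0$ proves \eqref{eq:Prop1}. The uncentered continuous identity in \eqref{LimUncentered} follows by the identical argument with $C>2$ in place of $C>1$, and the discrete uncentered identity runs the same scheme on $\mathbb{Z}$ with the one–dimensional Vitali lemma; the only new bookkeeping is that a dilated interval contains a few more integers than its length, which is harmless because every disjoint optimal interval — however short — carries $f$-mass $\gtrsim\|f\|_{\ell^1}/N$, so there are at most $O(\delta N)$ of them. I expect the main obstacle to be isolating the geometric fact that the optimal ball eventually leaves $B_{R_0}(0)$ and arranging the dyadic decomposition so that the harmless factor $|x|^{d}\le2^{jd}$ in the mass bound is beaten by the smallness $\int_{X\setminus B_{R_0}(0)}|f|\le\delta\|f\|_{L^1}$; the Vitali bookkeeping around it is routine.
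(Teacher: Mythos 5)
Your argument is correct, and it runs on the same engine as the paper's: the lower bound $Mf(x)\ge (1-\delta)\|f\|_{L^1}/(C_d(2|x|)^d)$ away from the origin, the observation that when $0<r_x\le|x|/C$ the optimal ball lies in the region $|z|\ge(1-1/C)|x|$ (respectively $(1-2/C)|x|$ in the uncentered case, which is exactly where $C>1$ versus $C>2$ enters), and a Vitali covering argument converting the resulting per-ball mass lower bound into a measure bound. The packaging differs in three ways, all legitimate. First, you argue directly with a dyadic decomposition in $|x|$ and obtain the quantitative bound $\limsup_N \mu(S_{N,f})/N^d\le c_d\,\delta/(1-\delta)$ before sending $\delta\to0$, whereas the paper argues by contradiction along a sparse subsequence $(N_k)$, covering $S_{N_k,f}\setminus S_{N_{k-1},f}$ by balls contained in pairwise disjoint annuli, each of which is then forced to carry a fixed amount of mass, contradicting integrability. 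Second, because you insist on using the genuinely optimal radius $r_x$, you must treat the set $\{r_x=0\}$ separately; your route through the Lebesgue differentiation theorem (giving $Mf(x)=|f(x)|$ a.e.\ there, hence $\int_{\{r_x=0\}}|x|^{-d}\,dx<\infty$ and an $o(N^d)$ bound) is sound, but the paper sidesteps this case entirely by working with near-optimal radii $\alpha_x\le|x|/C$ satisfying $A_{\alpha_x}f(x)\ge Mf(x)/2$, which exist for every $x\in S_{N,f}$ whether or not $r_x$ vanishes --- a uniform device you could adopt to merge your two pieces and drop the differentiation theorem. Third, you use the infinite Vitali lemma with fivefold dilates on each dyadic annulus, avoiding the paper's reduction to compact subsets via inner regularity followed by the finite Vitali lemma with threefold dilates. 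Your version is arguably cleaner to write up and gives an explicit rate in $\delta$; the paper's near-optimal-radius trick is worth borrowing in any case.
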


\begin{remark}
    The proof of this proposition follows very similar arguments to those of \cite[Theorem 1.3]{TemurContinuous} and \cite[Theorem 2]{TemurDiscrete}. For the sake of completeness, we include a sketch of this proof in Appendix \ref{AppendixProof}. 
\end{remark}

Combining Theorem \ref{thm: Thm1}, Proposition \ref{prop:Prop1}, and \eqref{DensityRn}, we obtain the following result.

\begin{corollary} \label{prop:DensityHigher}
Let $0\neq f\in L^{1}(X)$. For all $\varepsilon>0$, we have that
\begin{equation}\label{eq: lim=1}
\lim_{N\to\infty}\frac{\mu( \{\,x \in \mathbb{R}^d:\ |x|\le N,\ 1-\varepsilon\leq\frac{r_x}{|x|}\leq 1+\varepsilon\,\})}{C_d N^d}=1
\end{equation} 
and
\begin{equation}\label{eq: lim=2}
\lim_{N\to\infty}\frac{\nu( \{\,n \in \mathbb{Z}:\ |n|\le N,\ 1-\varepsilon\leq\frac{r_n}{|n|}\leq 1+\varepsilon\,\})}{2N}=1.
\end{equation}
\end{corollary}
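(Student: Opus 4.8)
The plan is to bound the complement of the ``good'' set inside the ball $\{|x|\le N\}$ and then pass to complements, feeding in the three inputs named in the statement. First I would reduce to small $\varepsilon$: since $[1-\varepsilon_1,1+\varepsilon_1]\subseteq[1-\varepsilon_2,1+\varepsilon_2]$ for $\varepsilon_1\le\varepsilon_2$, the sets whose measures appear in \eqref{eq: lim=1} and \eqref{eq: lim=2} are nondecreasing in $\varepsilon$ and are always contained in $\{|x|\le N\}$, so it is enough to prove both limits for all $\varepsilon$ in some interval $(0,\varepsilon_0)$; in particular I may assume $0<\varepsilon<1$ and fix a real number $C$ with $1<C<1/\varepsilon$.

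Next I would apply Theorem \ref{thm: Thm1}: there is $R=R(\varepsilon,f)>0$ such that $r_x/|x|\in[0,\varepsilon)\cup(1-\varepsilon,1+\varepsilon)$ whenever $|x|>R$. Hence, if $x\in X$ satisfies $|x|\le N$ and $r_x/|x|\notin[1-\varepsilon,1+\varepsilon]$, then either $|x|\le R$, or $|x|>R$, in which case the inclusion $(1-\varepsilon,1+\varepsilon)\subseteq[1-\varepsilon,1+\varepsilon]$ forces $r_x/|x|<\varepsilon<1/C$, i.e. $x\in S_{N,f}$, where $S_{N,f}$ denotes the set from Proposition \ref{prop:Prop1} in the continuous case and the discrete set defined just before \eqref{DensityRn} in the discrete case. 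Writing $G_N$ for the good set appearing in \eqref{eq: lim=1}, resp. \eqref{eq: lim=2}, this yields
\[
\{x\in X:|x|\le N\}\setminus G_N\ \subseteq\ \{x\in X:|x|\le R\}\cup S_{N,f}.
\]

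Now I would take measures and divide by the volume of $\{|x|\le N\}$. In the continuous case $\mu(\{|x|\le N\})=C_dN^d$, so
\[
0\le 1-\frac{\mu(G_N)}{C_dN^d}=\frac{\mu(\{|x|\le N\}\setminus G_N)}{C_dN^d}\le\frac{R^d}{N^d}+\frac{1}{C_d}\cdot\frac{\mu(S_{N,f})}{N^d},
\]
whose right-hand side tends to $0$ as $N\to\infty$ by Proposition \ref{prop:Prop1}; this gives \eqref{eq: lim=1}. In the discrete case $\nu(\{|n|\le N\})=2\lfloor N\rfloor+1$, so
\[
0\le\frac{2\lfloor N\rfloor+1}{2N}-\frac{\nu(G_N)}{2N}\le\frac{2R+1}{2N}+\frac{|S_{N,f}|}{2N},
\]
whose right-hand side tends to $0$ by \eqref{DensityRn}; since $(2\lfloor N\rfloor+1)/(2N)\to1$, this gives \eqref{eq: lim=2}.

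I do not expect a genuine obstacle here, since the argument is a bookkeeping combination of Theorem \ref{thm: Thm1}, Proposition \ref{prop:Prop1}, and \eqref{DensityRn}. The points needing a little care are: matching the closed interval $[1-\varepsilon,1+\varepsilon]$ of the corollary with the open interval $(1-\varepsilon,1+\varepsilon)$ of Theorem \ref{thm: Thm1} (immediate from the inclusion of intervals); choosing $1<C<1/\varepsilon$, which is precisely why the reduction to $\varepsilon<1$ is made, so that Proposition \ref{prop:Prop1} and \eqref{DensityRn} are applicable with that $C$; and carrying the normalizing constant $C_d=\mu(B_1(0))$ through the continuous computation. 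The single point $x=0$, where $r_x/|x|$ is undefined, is harmlessly absorbed into $\{|x|\le R\}$ and is negligible for the densities.
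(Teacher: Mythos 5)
Your argument is correct and is precisely the combination of Theorem \ref{thm: Thm1}, Proposition \ref{prop:Prop1}, and \eqref{DensityRn} that the paper invokes (the paper states the corollary without writing out the details). The bookkeeping — reducing to $\varepsilon<1$, choosing $1<C<1/\varepsilon$, and absorbing the bounded exceptional set and the point $x=0$ into a ball of fixed radius — is exactly what is needed.
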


\begin{remark}
    (i) Observe that \eqref{eq: lim=1} and \eqref{eq: lim=2} trivially hold for compactly supported functions.

    (ii) By combining Remark \ref{rem: Rem1} and Proposition \ref{prop:Prop1}, we deduce that a similar result holds for the uncentered Hardy-Littlewood maximal function: For all $0\neq f \in L^1(X)$ and all $\varepsilon>0$, we have
    \begin{equation*}
        \lim_{N\to\infty}\frac{\mu(\{\,x \in \mathbb{R}^d:\ |x|\le N,\ \frac{1}{2}-\varepsilon\leq\frac{\widetilde{r}_x}{|x|}\leq \frac{1}{2}+\varepsilon\,\})}{C_d N^d}=1
    \end{equation*}
    and
    \begin{equation*}
        \lim_{N\to\infty}\frac{\nu(\{\,n \in \mathbb{Z}:\ |n|\le N,\ \frac{1}{2}-\varepsilon\leq\frac{\widetilde{r}_n}{|n|}\leq \frac{1}{2}+\varepsilon\,\})}{2N}=1.
    \end{equation*}
\end{remark}

In the following theorem, we study the density of the set of zeros of the frequency function. A particular case of our result answers Question \ref{QuestionNlogN}, but this theorem is more general.

\begin{theorem}
\label{thm: counterexample g}
For each function $f \in L^1(X)$ and each $N \in \mathbb{Z}^+$, we define the set
\begin{equation*}
    Z_{N,f}:=\left\{\,x\in X:\ |x|\le N,\  r_x = 0\,\right\}.
\end{equation*}
Let $g:\mathbb N\to(0,\infty)$ be a non-decreasing function such that $g(N)\to\infty$ as $N\to\infty$.
Then:
\begin{itemize}
\item[(i)] There exists $0\neq f\in L^1(\R^d)$ such that
\[
\limsup_{N\to\infty}\frac{\mu(Z_{N,f})}{N^d/g(N)} \ge \frac{C_d}{2^d}.
\]
\item[(ii)] There exists $0\neq f\in \ell^1(\Z)$ such that
\[
\limsup_{N\to\infty}\frac{\nu(Z_{N,f})}{N/g(N)} \ge 1.
\]
\end{itemize}
\end{theorem}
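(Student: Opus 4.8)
The plan is to construct, in each setting, an $L^1$ function supported on a very sparse sequence of \emph{plateaus} placed at geometrically growing distances from the origin, with heights tuned so that on each plateau the maximal function is attained at the center — and so that the single plateau sitting at scale $N_k$ already contributes $\gtrsim N^d/g(N)$ (resp. $N/g(N)$) to $Z_{N,f}$ at $N=N_k$. The underlying mechanism is elementary: if $|f|\equiv c$ on an open ball $B$ and $Mf(x)\le c$ for every $x\in B$, then for each $x\in B$ we may pick $\delta>0$ with $B_\delta(x)\subseteq B$, so $A_\delta f(x)=c=Mf(x)$ and in fact $(0,\delta]\subseteq E_{f,x}$; hence $\inf E_{f,x}=0$ and $r_x=0$. (In the discrete case the analogue gives $A_0f(n)=|f(n)|=Mf(n)$, i.e. $0\in E_{f,n}$ and $r_n=0$.) Thus it suffices to build plateaus on which $A_rf\le c$ for \emph{every} radius $r$, and to place the plateau of scale $N_k$ inside $\{|x|\le N_k\}$ with measure $\gtrsim N_k^d/g(N_k)$.

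\textbf{Proof of (ii).} Since $g$ is non-decreasing with $g(N)\to\infty$, fix integers $N_1<N_2<\cdots$ with $N_{k+1}\ge 8N_k$, $N_k\ge 2^{k+2}$ and $g(N_k)\ge 2^{k+2}$. Put $\ell_k:=\lceil N_k/g(N_k)\rceil$, $B_k:=\{N_k-\ell_k+1,\dots,N_k\}$ (so the $B_k$ are disjoint, $\ell_k\le N_k/2^{k+1}$, and $B_k\subseteq[\tfrac12N_k,N_k]$), and set $f:=\sum_k c_k\,\mathbf 1_{B_k}$ with $c_k:=2^{-k}/\ell_k$, so $\|f\|_{\ell^1}=\sum_k 2^{-k}<\infty$ and $f\not\equiv 0$. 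Fix $k$ and $n\in B_k$. The spacing of the $N_k$ gives $\operatorname{dist}(B_k,B_m)>\tfrac14 N_k$ for all $m\ne k$, so I split on the radius: if $r<\tfrac14 N_k$ the window $\{n-r,\dots,n+r\}$ meets only $B_k$, whence $A_rf(n)\le c_k$; if $r\ge\tfrac14 N_k$ then $A_rf(n)\le\|f\|_{\ell^1}/(2r+1)\le 2/N_k\le c_k$, the last step being exactly the inequality $\ell_k\le N_k/2^{k+1}$. Hence $Mf(n)=c_k=|f(n)|$, so $r_n=0$ for every $n\in B_k$, giving $\nu(Z_{N_k,f})\ge\ell_k\ge N_k/g(N_k)$; letting $k\to\infty$ yields $\limsup_N \nu(Z_{N,f})/(N/g(N))\ge 1$.

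\textbf{Proof of (i).} Replace intervals by balls internally tangent to $\{|x|=N_k\}$. Choose $N_k\in\mathbb N$ with $N_{k+1}\ge 8N_k$ and $g(N_k)\ge 2^{(k+2)d}$; put $\rho_k:=\tfrac12 N_k/g(N_k)^{1/d}$ (so $\rho_k\le N_k/2^{k+3}$), $\mathcal B_k:=B_{\rho_k}\big((N_k-\rho_k,0,\dots,0)\big)\subseteq\{|x|<N_k\}$, and $f:=\sum_k c_k\,\mathbf 1_{\mathcal B_k}$ with $c_k:=2^{-k}/(C_d\rho_k^d)$, so $\|f\|_{L^1}=\sum_k 2^{-k}<\infty$. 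For $x\in\mathcal B_k$ one checks $\operatorname{dist}(x,\mathcal B_m)\ge\tfrac14 N_k$ when $m\ne k$: if $r<\tfrac14 N_k$ then $B_r(x)$ meets only $\mathcal B_k$ and $A_rf(x)\le c_k$, while if $r\ge\tfrac14 N_k$ then $A_rf(x)\le\|f\|_{L^1}/(C_d r^d)\le 4^d/(C_dN_k^d)\le c_k$, the last inequality reducing to $\rho_k\le N_k\,2^{-2-k/d}$, which follows from $\rho_k\le N_k/2^{k+3}$ since $k+3\ge 2+k/d$ for all $d\ge 1$. Thus $Mf\le c_k$ on $\mathcal B_k$, so $r_x=0$ for every $x\in\mathcal B_k$ and $\mu(Z_{N_k,f})\ge\mu(\mathcal B_k)=C_d\rho_k^d=\frac{C_d}{2^d}\cdot\frac{N_k^d}{g(N_k)}$, whence $\limsup_N \mu(Z_{N,f})/(N^d/g(N))\ge C_d/2^d$.

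\textbf{The main obstacle.} The one real point is a balancing act: to force $Mf(x)\le c_k$ on the $k$-th plateau, its height must dominate the average of $f$ over every large ball reaching back toward the origin, i.e. $c_k\gtrsim\|f\|_1/N_k^d$; but the plateau has measure $\asymp N_k^d/g(N_k)$, so $L^1$-summability forces $\sum_k c_k\,N_k^d/g(N_k)<\infty$. These are compatible only when $g(N_k)$ is enormous — exponential in $k$ — at the chosen scales, which is exactly what spacing the $N_k$ out very fast provides, since $g\to\infty$. Pinning down a set of mutually compatible constraints on $(N_k)$ and verifying the large-radius estimate with the correct constants is the crux; the rest is bookkeeping, and (as noted in the paper) the uncentered analogues follow from the same construction.
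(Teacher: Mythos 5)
Your proof is correct and follows essentially the same strategy as the paper's: a function of the form $\sum_k a_k \mathds{1}_{E_k}$ with sparse plateaus $E_k$ of measure $\asymp N_k^d/g(N_k)$ near the sphere $|x|=N_k$ and heights $a_k = 2^{-k}/\mu(E_k)$, together with the verification that $Mf = a_k$ on $E_k$ (small radii give the value $a_k$, large radii are killed by $\|f\|_1/(C_d r^d) \le a_k$), so that $r_x = 0$ there. The only differences are cosmetic — you use balls internally tangent to $\{|x|=N_k\}$ rather than orthant-portions of annuli just outside it, which yields the same constant $C_d/2^d$ and lets you skip the paper's separate treatment of the case $N^d/g(N)\to 0$.
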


\begin{remark}
    (i) Notice that the slower $g$ increases, the stronger the previous theorem is.

    (ii) As a consequence of this theorem, there are non-zero functions $f \in L^1(\mathbb{R}^d)$ and $h \in \ell^1(\mathbb{Z})$ such that 
    \begin{equation*}
        \limsup_{N\to\infty}\frac{\mu(S_{N,f})}{N^d/g(N)} \ge \frac{C_d}{2^d} \quad \text{and} \quad \limsup_{N\to\infty}\frac{\nu(S_{N,h})}{N/g(N)} \ge 1.
    \end{equation*}
    By considering $g(N) = \log(N)$, we deduce that the answer to Question \ref{QuestionNlogN} is negative. In fact, this theorem also shows that the value of the limit \eqref{LimQuestion} is different from $0$ if we change the logarithmic function in the denominator for some other functions with slower growth, such as $\log(\log(N))$ or $\log^{1-\varepsilon}(N)$ for $\varepsilon>0$.
\end{remark}

Finally, the statement in Theorem \ref{thm: Thm1} does not hold for $p>1$, that is the content of our next result.

\begin{theorem} \label{thrm:lp}
For each $p \in (1,+\infty]$ there exists a function $f \in \ell^{p}(\mathbb{Z})$ such that the set
    \begin{equation*}
        \left\{n\in \mathbb{Z} : \frac{r_n}{|n|}\in \left[\frac{1}{8} , \frac{7}{8}\right]\right\}
    \end{equation*}
    is unbounded.
\end{theorem}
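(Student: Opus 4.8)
\textbf{Proof proposal for Theorem \ref{thrm:lp}.}

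The plan is to exhibit, for each fixed $p\in(1,+\infty]$, an explicit function $f\in\ell^p(\mathbb Z)$ together with an infinite sequence of integers $n_k\to+\infty$ for which the frequency function $r_{n_k}$ is comparable to $|n_k|$ but bounded away from both $0$ and $|n_k|$. The natural construction is a lacunary sum of bumps: take $f$ supported on (or concentrated near) a sparse sequence of locations $N_j$ growing geometrically, say $N_j=4^j$, with $f(N_j)=a_j$ for a slowly decaying sequence $a_j$ chosen so that $\sum a_j^p<\infty$ but the tail sums $\sum_{i\ge j}a_i$ decay slowly enough that they still dominate, over the relevant scales, the contribution of the mass near the origin. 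The point where $p>1$ differs from $p=1$ is precisely that in $\ell^p$ one can keep an infinite amount of ``spread-out'' mass that is invisible to the $\ell^1$-tail argument used in Theorem \ref{thm: Thm1}; the block at scale $N_j$ contributes $a_j/(2r+1)$-type averages that, for $n$ near $N_{j}/2$ or near a suitable intermediate point, are maximized by a radius of order $|n|$ with a constant strictly between $0$ and $1$.

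Concretely, I would fix a target ratio — the statement only asks for some $\varepsilon$, and the introduction promises $r_n/|n|\sim 1/4$, so I would aim for that — and choose the evaluation points $n_k$ to be (roughly) the midpoints between $0$ and $N_k$, or more precisely points chosen so that the ball of radius $\approx |n_k|$ reaching back toward the origin captures the big mass $a_0+\dots$ sitting near $0$, while no smaller ball does and no larger ball does better. The heart of the argument is a comparison of three competing averages at $n_k$: (a) the small-radius average, which is $0$ or tiny because $f$ vanishes in a neighbourhood of $n_k$; (b) the average over a ball of radius $\sim n_k$ that just reaches the origin-mass, which is of size $\sim (\text{origin mass})/n_k$; and (c) averages over much larger balls of radius $\sim N_k$ or beyond, which are of size $\sim(\text{total mass of tail up to scale }N_k)/N_k$. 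I must arrange the weights $a_j$ so that (b) strictly beats both (a) and (c): beating (a) is automatic once there is a gap in the support, and beating (c) is where the choice of $a_j$ decaying like, say, $j^{-1/p}(\log j)^{-1}$ (summable in $p$-th power, with slowly growing partial sums) is used, so that the cumulative tail mass up to $N_k$ grows more slowly than $N_k$ times the relevant constant. One then checks that the minimizing good radius at $n_k$ is the one in regime (b), i.e. $r_{n_k}\in[\varepsilon|n_k|,(1-\varepsilon)|n_k|]$ for a fixed $\varepsilon>0$ and all large $k$. The case $p=\infty$ is handled separately and more easily: take $f$ itself the indicator of a union of long, widely spaced intervals, which is bounded but whose maximal function at carefully chosen points is realized at an intermediate scale.

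The main obstacle I anticipate is the bookkeeping that simultaneously pins the minimal good radius from above and from below: one must rule out that for the chosen $n_k$ a tiny radius already achieves the supremum (this forces the support gap around $n_k$ to be genuinely large, hence the geometric spacing) and also that a radius of order $N_k$ or larger is strictly better (this forces the precise decay rate and spacing and is the quantitative core of the proof). Getting a clean closed-form comparison is messy because the discrete averages $A_rf(n)=\frac{1}{2r+1}\sum_{|j|\le r}|f(n+j)|$ are piecewise-constant in $r$ with many breakpoints; I would manage this by replacing exact optimization with two-sided estimates — an explicit lower bound for $A_{r^\ast}f(n_k)$ at the ``good'' radius $r^\ast\approx |n_k|$, and an explicit upper bound $A_rf(n_k)\le$ (something) valid uniformly for all $r$ outside $[\varepsilon|n_k|,(1-\varepsilon)|n_k|]$ — and then choosing the parameters so the former exceeds the latter. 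Once the parameters are locked, verifying $\sum a_j^p<\infty$ is a one-line check, and the statement follows with $\varepsilon$ read off from the construction.
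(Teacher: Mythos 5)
Your $p=\infty$ sketch (indicator of long, widely spaced blocks, evaluated at carefully chosen points) is essentially the paper's construction. The genuine gap is in the case $1<p<\infty$: the lacunary construction you actually specify --- narrow bumps $f(N_j)=a_j$ at $N_j=4^j$ with decaying weights such as $a_j\sim j^{-1/p}(\log j)^{-1}$, whose partial sums grow --- cannot produce points with $r_n/|n|$ bounded away from both $0$ and $1$. Indeed, your regime (b) is a ball that \emph{just reaches the mass near the origin}, so its radius is $\approx|n_k|$ and it certifies only $r_{n_k}/|n_k|\to 1$, which lies in the region already permitted by Theorem \ref{thm: Thm1} and contributes nothing to the set in the statement. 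If instead you intend the optimum to stop at an intermediate bump, say radius $\theta|n_k|$ with $0<\theta<1$ capturing a single weight $a_j$, then enlarging the radius to $\approx|n_k|$ increases the denominator $2r+1$ by at most a bounded factor while increasing the numerator from $a_j$ to the cumulative sum $\sum_{i<k}a_i$, which for your decaying weights eventually dwarfs any single $a_j$; hence the minimal maximizing radius is again asymptotically $|n_k|$, not $\theta|n_k|$. The mechanism can only work if the mass placed at scale $N_k$ \emph{dominates the total mass of all previous scales}, i.e.\ the scale-by-scale $\ell^1$-masses must grow geometrically; this is impossible for point masses with $\sum_j a_j^p<\infty$, and it is exactly what the paper arranges by spreading the mass at scale $N_k$ over a block $I_k=[N_k+1,N_k+L_k]$ of length $L_k=\lfloor N_k/3\rfloor$ with height $n^{-\alpha}$, $\alpha p>1$: the block's $\ell^1$ mass is of order $N_k^{1-\alpha}$, which grows rapidly since $N_{k+1}=N_k^{10}$, while its $\ell^p$ mass is of order $N_k^{1-\alpha p}$ and is summable. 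Evaluating at the point $n_k=N_k+L_k+1$ adjacent to the block, one then pins down $r_{n_k}=L_k$, so $r_{n_k}/n_k\to 1/4$.

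A smaller point: even with the right weights, ruling out small radii ``because there is a gap in the support'' is not enough in the setup that actually works, where the evaluation point sits next to the block and small radii already see part of it; there one needs the monotonicity of $r\mapsto A_rf(n_k)$ for $r<L_k$ (the average increases as more of the adjacent block is included), together with separate upper bounds for the two families of bad radii --- those reaching back to the earlier blocks and those reaching forward to the later blocks. These are the two-sided estimates you anticipated, but they only close once the block structure above replaces the decaying point masses.
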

\begin{remark}
    (i) In our proof for the case $p=+\infty$, we construct a function $f \in \ell^{\infty}(\mathbb{Z})$ such that the set 
    \begin{equation*}
        \left\{n\in \mathbb{Z} : \frac{r_n}{|n|}\in \left[\frac{1}{4} , \frac{3}{4}\right]\right\}
    \end{equation*}
    is unbounded, which is a better result than that of the case $p \in (1,+\infty)$.

    (ii) The same functions constructed in the proof of this theorem show that the analogous result for the uncentered Hardy-Littlewood maximal function holds: For each $p \in (1,+\infty]$ there exist $\varepsilon>0$ and a function $f \in \ell^{p}(\mathbb{Z})$ such that the set
    \begin{equation*}
        \left\{n\in \mathbb{Z} : \frac{\widetilde{r}_n}{|n|}\in \left[\frac{1}{16} , \frac{7}{16} \right]\right\}
    \end{equation*}
    is unbounded.

    (iii) For $p = +\infty$, we readily see that the sets 
    \begin{equation*}
        \left\{n\in \mathbb{Z} : \frac{r_n}{|n|} \geq 1 + \varepsilon\right\} \quad \text{and} \quad \left\{n\in \mathbb{Z} : \frac{\widetilde{r}_n}{|n|} \geq \frac{1}{2} + \varepsilon\right\}
    \end{equation*}
    are not necessarily bounded. Indeed, for the function $f = \mathds{1}_{(-\infty,0]} \in \ell^{\infty}(\mathbb{Z})$, we have that $r_n = \widetilde{r}_n = +\infty$ for all positive integers $n$. 
\end{remark}

\section{Asymptotic behavior of the frequency function}\label{Sec:Proof1}

\begin{proof}[Proof of Theorem \ref{thm: Thm1}]
We discuss the case when $X = \mathbb{R}^d$. The case $X = \Z$ follows similarly. If $f = 0$, the result follows immediately. Then we suppose that $f\neq 0$.

We proceed via contradiction, let us assume that the set 
\begin{equation*}
    \left\{ x \in X : \frac{r_x}{|x|} \in [\varepsilon,1-\varepsilon] \right\}
\end{equation*}
is not bounded. Hence, there exists a sequence $(x_k)$ in this set such that $|x_k| \to \infty$ as $k \to +\infty$. Without loss of generality, we may assume that $(2-\varepsilon)|x_k| < \varepsilon |x_{k+1}|$ for each $k \in \mathbb{Z}^+$. Then, the annuli
\begin{equation*}
    U_k = \{y \in X : \varepsilon|x_k| \leq |y| \leq (2-\varepsilon)|x_k|\}
\end{equation*}
are pairwise disjoint. Hence,
\begin{equation} \label{ContraConv}
    \sum_{k=1}^{\infty}\int_{U_k}|f(y)| \, dy \leq \int_X |f(y)|\, dy < \infty.
\end{equation}
Since $(1-\varepsilon)|x_k|\geq r_{x_k}\geq\varepsilon|x_k|>0$ for every positive integer $k$, as a consequence of the triangle inequality
\begin{equation} \label{UpperM}
    Mf(x_k) = A_{r_{x_k}}|f|(x_k) = \frac{1}{\mu(B_{r_{x_k}}(x_k))} \int_{B_{r_{x_k}}(x_k)} |f(y)|\, dy \leq \frac{1}{C_d \varepsilon^d |x_k|^d} \int_{U_k}|f(y)| \, dy
\end{equation}
for all $k\in\Z^+$. Let $R$ be a positive constant such that
\begin{equation*}
    \int_{B_R(0)}|f(y)|\, dy \geq \frac{\|f\|_1}{2}.
\end{equation*}
We assume without loss of generality that $|x_1| > R$. Thus,
\begin{equation} \label{LowerM}
    Mf(x_k) \geq A_{2|x_k|}|f|(x_k) \geq \frac{\|f\|_1}{C_d 2^{d+1} |x_k|^d}
\end{equation}
for all $k\in\Z^+$. From \eqref{UpperM} and \eqref{LowerM}, we obtain
\begin{equation*}
    \frac{\varepsilon^d}{2^{d+1}} \|f\|_1 \leq  \int_{U_k}|f(y)| \, dy
\end{equation*}
for all $k\in\Z^+$, which contradicts \eqref{ContraConv}. Therefore,
\begin{equation*}
    \left\{ x \in X : \frac{r_x}{|x|} \in [\varepsilon,1-\varepsilon] \right\}
\end{equation*}
is a bounded set. Then there is $R'>0$ such that
\begin{equation*}
    \left\{ x \in X : \frac{r_x}{|x|} \in [\varepsilon,1-\varepsilon] \right\} \subseteq B_{R'}(0).
\end{equation*}
Observe that for fixed $\varepsilon>0$ and $d\in\Z^+$, there are positive constants $K$ and $\delta$ such that
\begin{equation}\label{ineq: k, delta, epsilon, d}
    1 < K < (1-\delta)^{1/d}(1+\varepsilon).
\end{equation}
We assume without loss of generality that $R'$ is sufficiently large, such that 
\begin{equation}\label{ineq: 1-delta}
    \int_{B_{R'}(0)}|f(y)| \, dy \geq (1-\delta)\|f\|_1.
\end{equation}
We define $M:=\max\{R'/(K-1),R'\}$. By the triangle inequality, we have $B_{R'}(0) \subseteq B_{K|x|}(x)$ for all $x \in X \setminus B_M(0)$. Then, using \eqref{ineq: 1-delta} we obtain
\begin{equation*}
    Mf(x) \geq A_{K|x|}|f|(x) = \frac{1}{\mu(B_{K|x|}(x))} \int_{B_{K|x|}(x)} |f(y)| \, dy \geq \frac{1-\delta}{C_d K^d |x|^d} \|f\|_1,
\end{equation*}
for every $x \in X \setminus B_M(0)$. Let $x \in X\setminus B_{M}(0)$ be such that $\frac{r_x}{|x|} \not \in [0,\varepsilon)$. Notice that
\begin{equation*}
    Mf(x) = \frac{1}{\mu(B_{r_x}(x))} \int_{B_{r_x}(x)} |f(y)| \, dy \leq \frac{1}{C_d r_x^d} \|f\|_1.
\end{equation*}
Combining the two previous inequalities with \eqref{ineq: k, delta, epsilon, d}, we conclude 
\begin{equation*}
    \frac{r_x}{|x|} \leq \frac{K}{(1-\delta)^{1/d}} < 1+\varepsilon.
\end{equation*}
Therefore, the set \eqref{ThrmRn/n} is contained in the ball $B_M(0)$. This finishes the proof of the theorem.
\end{proof}

\section{Density of zeros of the frequency function} \label{Sec:Proof2}

\begin{proof}[Proof of Theorem \ref{thm: counterexample g}]
We discuss the case when $X = \mathbb{R}^d$. The case of $\Z$ with the counting measure follows essentially from the same proof, replacing $C_d$ by 2 and $d$ by 1.

Observe that if $N^d / g(N) \to 0$ as $N \to \infty$, the theorem follows immediately by considering the characteristic function $f:= \mathds{1}_{B_1(0)}$. Then, we assume without loss of generality that $N^d/g(N) \not\to 0$ as $N \to \infty$, which implies the existence of a positive integer $M>1$ and a strictly increasing sequence
$N_k\to\infty$, such that
\begin{align}
\frac{N_k}{g(N_k)^{\frac{1}{d}}} &\geq \frac{1}{M-1}\quad \text{for all } k\in\Z^+, \label{eq:nd/g-large}\\
N_{k+1} &\geq 10\, N_k \quad \text{for all } k\in\Z^+,\label{eq:sep}\\
N_1&\geq 4.\nonumber
\end{align}
Since $g(N)\to\infty$ as $N\to\infty$, we can also assume that
\begin{align}
g(N_k) &\ge M^d\left[\left[\frac{C_d}{2^k}\left(\frac{4}{5}\right)^d+1\right]^{\frac{1}{d}}-1\right]^{-d}\ \text{and}\ g(N_k)\geq 2^d \quad \text{for all } k\in\Z^+. \label{eq:g-large}
\end{align}
For each $k\in\Z^+$, we define
\[
L_k:=\left\lceil \frac{N_k}{g(N_k)^{\frac{1}{d}}}\right\rceil,\ \
\text{and}\ \ I_k:=\{x\in X : N_k < |x|< N_k+L_k, \ \text{and}\ \ x_i\geq 0 \ \text{for all}\ 1\leq i\leq d\} .
\]
Since $g(N_k)\geq 2^d$ for all $k\in\Z^+$, it follows that $L_k\leq \lceil N_k/2 \rceil < \lceil N_k-1 \rceil < N_k$ for each $k \in \mathbb{Z}^+$. Moreover, \eqref{eq:nd/g-large} implies that 
\begin{equation} \label{boundL/N}
    L_k \leq M \frac{N_k}{g(N_k)^{\frac{1}{d}}}
\end{equation}
for all $k \in \mathbb{Z}^+$. Let
\[
a_k:=\frac{1}{2^{k}\mu(I_k)}.
\]
Define $f:X\to[0,\infty)$ by
\[
f(x):=
\begin{cases}
a_k, & x\in I_k,\\
0, & \text{otherwise}.
\end{cases}
\]
Then
\[
\|f\|_{L^1(X)}=\sum_{k\ge 1} a_k\mu(I_k)
=\sum_{k\ge 1} \frac{1}{2^k}=1,
\]
in particular, $0\neq f\in L^1(X)$.

\begin{lemma}
Let $k\in\Z^+$. Then, $Mf(x)=a_k$ for all $x\in I_k$. In particular, $r_x = 0$ for every $x \in I_k$.    
\end{lemma}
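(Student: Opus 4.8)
The plan is to show that for $x \in I_k$, the supremum defining $Mf(x)$ is attained (and equals $a_k$) by the ball $B_r(x)$ whose radius $r$ is just large enough to contain all of $I_k$, and that no smaller or larger ball can do better. First I would record the trivial lower bound: since $x \in I_k$ and $f \equiv a_k$ on $I_k$, for small radii $r$ we get $A_r f(x) = a_k$ (indeed $A_r f(x) = a_k$ as long as $B_r(x) \subseteq I_k$, and in general $A_r f(x) \leq a_k$ whenever $B_r(x)$ meets only the block $I_k$ among the $I_j$'s). Hence $Mf(x) \geq a_k$, and if we can prove $Mf(x) \leq a_k$ we are done; the ``in particular'' statement is then immediate, because any ball $B_r(x)$ with $r$ small enough to sit inside $I_k$ already achieves the value $a_k = Mf(x)$, forcing $r_x = 0$.

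The core of the argument is the upper bound $A_r f(x) \leq a_k$ for every $r > 0$. Fix $x \in I_k$ and $r>0$. The mass $\int_{B_r(x)} |f|$ decomposes as $\sum_j a_j \,\mu(B_r(x) \cap I_j)$. The key geometric point is that because the blocks $I_j$ live in well-separated spherical shells — $I_j$ sits in the shell $\{N_j < |y| < N_j + L_k\} \subseteq \{N_j \leq |y| \leq 2N_j\}$, using $L_j < N_j$, and the separation \eqref{eq:sep} gives $N_{j+1} \geq 10 N_j$ — a ball that reaches from $I_k$ into a block $I_j$ with $j \neq k$ must be so large that the ``wasted'' volume far exceeds what the heavier blocks can compensate for. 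Concretely I would split into two sub-cases. If $r$ is small enough that $B_r(x)$ meets no $I_j$ with $j < k$, then the only blocks contributing are $I_k$ and possibly blocks $I_j$ with $j > k$; but those have smaller density ($a_j \leq a_k$ is false in general, so one must instead use that $a_j \mu(I_j) = 2^{-j}$ is small and $\mu(B_r(x))$ is correspondingly large), and a direct estimate shows the average stays $\leq a_k$. If instead $B_r(x)$ reaches a block $I_j$ with $j < k$, then $r \geq N_k - (N_j + L_j) \geq N_k - 2N_j \geq N_k - \frac{2}{10}N_k = \frac{4}{5}N_k$ roughly, so $\mu(B_r(x)) \geq C_d (\tfrac45 N_k)^d$, while the total available mass is at most $\|f\|_1 = 1$; comparing this with $a_k = 1/(2^k \mu(I_k))$ and using that $\mu(I_k)$ is comparable to $L_k N_k^{d-1} \approx N_k^d / g(N_k)$ together with the lower bound \eqref{eq:g-large} on $g(N_k)$ — which was manifestly reverse-engineered to make exactly this inequality work — yields $A_r f(x) \leq 1/\mu(B_r(x)) \leq a_k$.

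The main obstacle I anticipate is the bookkeeping in the intermediate regime, where $B_r(x)$ swallows all of $I_k$ and part (or all) of several neighbouring blocks $I_{k+1}, I_{k+2}, \dots$ but not yet any $I_j$ with $j<k$: here one cannot simply bound the average by $1/\mu(B_r(x))$ (that would need $r$ large), nor by the density of the single block $I_k$ (several blocks contribute), so one must sum a geometric-type series $\sum_{j \geq k} a_j \mu(I_j) = 2^{-(k-1)}$ and check it is dominated by $a_k \mu(B_r(x))$; this reduces to showing $\mu(B_r(x)) \geq 2\mu(I_k)$ whenever $B_r(x)$ extends beyond $I_k$ toward infinity, which follows because such a ball has radius at least on the order of $L_k$ centred at a point at distance $\sim N_k \gg L_k$ from the origin, so it contains a full half-space-like cone of volume far exceeding $\mu(I_k)$. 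Once these volume comparisons are set up, the inequalities are routine and the constants in \eqref{eq:g-large} and \eqref{boundL/N} close the argument. I would then conclude that $Mf(x) = a_k$ on $I_k$, and since this value is already attained by arbitrarily small balls centred at $x$, $r_x = 0$ there.
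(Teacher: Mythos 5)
Your overall strategy (lower bound from small balls inside $I_k$, hence $r_x=0$; upper bound $A_rf(x)\le a_k$ by cases according to which blocks $B_r(x)$ meets, with the separation \eqref{eq:sep} and the reverse-engineered condition \eqref{eq:g-large} closing the estimates) is the same as the paper's, and your treatment of the case where $B_r(x)$ reaches a block $I_j$ with $j<k$ (radius at least $N_k-2N_{k-1}\ge\frac{4}{5}N_k$, then the crude bound $A_rf(x)\le\|f\|_1/(C_dr^d)\le a_k$ via \eqref{eq:g-large}) matches the paper. The genuine gap is in your ``intermediate regime''. First, the geometry is misread: if $B_r(x)$ meets $I_{k+1}$ then, since $|x|<N_k+L_k<2N_k$ and $I_{k+1}$ lies outside $B_{N_{k+1}}(0)$ with $N_{k+1}\ge 10N_k$, one has $r\ge N_{k+1}-N_k-L_k\ge 8N_k$, not merely $r$ ``on the order of $L_k$'' (in fact such a ball contains $B_{6N_k}(0)$ and hence every $I_j$ with $j\le k$, so the regime ``meets some $I_j$ with $j>k$ but none with $j<k$'' is empty for $k\ge2$). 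Second, and more seriously, the claim you use to close this case --- that a ball of radius of order $L_k$ centred at distance $\sim N_k$ from the origin has volume far exceeding $\mu(I_k)$, hence $\mu(B_r(x))\ge 2\mu(I_k)$ --- is false for $d\ge2$: such a ball has volume $\approx C_dL_k^d\approx C_dN_k^d/g(N_k)$, whereas $\mu(I_k)=\frac{C_d}{2^d}\left[(N_k+L_k)^d-N_k^d\right]\approx \frac{C_d d}{2^d}N_k^{d-1}L_k\approx\frac{C_d d}{2^d}N_k^d/g(N_k)^{1/d}$, which is much larger once $g(N_k)$ is large (your aside ``$\mu(I_k)\approx N_k^d/g(N_k)$'' makes the same $g$ versus $g^{1/d}$ slip). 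So, as written, the volume comparison that carries the only nontrivial case does not go through in dimension $d\ge2$.

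The fix is exactly the simplification the paper exploits: whenever $B_r(x)$ meets any block other than $I_k$, the separation \eqref{eq:sep} forces $r\ge\frac{4}{5}N_k$ (and even $r\ge 8N_k$ if the other block has larger index), so no block-by-block bookkeeping or geometric series over $j>k$ is needed; one simply bounds $A_rf(x)\le\|f\|_1/\mu(B_r(x))=1/(C_dr^d)\le\frac{1}{C_d}\left(\frac{5}{4N_k}\right)^d\le a_k$, the last inequality being precisely what \eqref{eq:g-large} together with \eqref{boundL/N} guarantees. With that replacement your argument coincides with the paper's.
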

\begin{proof}
Let $k$ be a positive integer and $x\in I_k$. Clearly we have
$$
A_r|f|(x)=a_k \ \text{for all}\ B_r(x)\subset I_k
$$
and
$$
A_r |f|(x)\leq a_k \ \text{if}\ B_r(x)\cap I_j=\emptyset\ \text{for all}\ j\neq k.
$$
On the other hand, if $B_r(x)\cap I_j \neq \emptyset$ for some $j\neq k$, we get that $B_r(x)\cap I_{k+1}\neq \emptyset$ or $B_r(x)\cap I_{k-1}\neq\emptyset$. Observe that, in the first case $r\geq N_{k+1}-N_k-L_k\geq 8N_k$, while, in the second case, $r\geq N_k-N_{k-1}-L_{k-1}\geq N_k-2N_{k-1}\geq \frac{4}{5}N_k$. Therefore
\begin{align*}
A_r|f|(x)\leq \frac{1}{\mu(B_r(x))}\|f\|_{L^1(X)} = \frac{1}{C_dr^d}\leq \frac{1}{C_d}\left(\frac{5}{4N_k}\right)^d\leq a_k,    
\end{align*}
where the last inequality follows from \eqref{eq:g-large} and \eqref{boundL/N}, since 
\begin{align*}
\frac{C_d}{2^k}\left(\frac{4}{5}\right)^d\geq \left[\frac{M}{g(N_k)^{\frac{1}{d}}}+1\right]^d-1\geq\frac{(N_k+L_k)^d-N^d_k}{N^d_k}= \frac{\mu(I_k)}{N^d_k}\frac{2^d}{C_d}\geq\frac{1}{a_k2^kN^d_k}.
\end{align*}
\end{proof}
Hence,
\begin{align*}
\frac{\mu(Z_{N_k+L_k,f})}{(N_k+L_k)^d/g(N_k+L_k)}&\geq g(N_k+L_k)\frac{\mu(I_k)}{(N_k+L_k)^d}\\
&=g(N_k+L_k)\frac{C_d}{2^d}\frac{(N_k+L_k)^d-N^d_k}{(N_k+L_k)^d}\\
&\geq g(N_k+L_k)\frac{C_d}{2^d}\frac{L^d_k}{(N_k+L_k)^d}\\
&\geq \frac{g(N_k+L_k)}{g(N_k)}\frac{C_d}{2^d}\frac{N^d_k}{(N_k+L_k)^d}\\
&\geq \frac{C_d}{2^d}\frac{1}{(1+\frac{L_k}{N_k})^d}\to \frac{C_d}{2^d}\ \text{as}\ k\to\infty.
\end{align*}

\end{proof}

\section{Size of the frequency function in $\ell^p(\mathbb{Z})$}\label{Sec:Proof3}

\begin{proof}[Proof of Theorem \ref{thrm:lp}]
Although it suffices to prove the theorem for $1<p<+\infty$, we begin with the case $p = +\infty$, since this case motivates the others. Let $N_1:= 2$, and for each positive integer $k$, we define
\begin{equation*}
    N_{k+1} := N_k^{10}, \quad L_k := \left \lfloor \frac{1}{3} N_k \right \rfloor, \quad \text{and}\quad I_k = [N_k+1, N_k+L_k].
\end{equation*}
We consider the function
\begin{equation*}
    f := \sum_{k=2}^{\infty} \mathds{1}_{I_k}.
\end{equation*}
Clearly $f \in \ell^{\infty}(\mathbb{Z})$. We shall prove that the set
\begin{equation*}
    \left\{ n \in \mathbb{Z} : \frac{r_n}{|n|} \in \left[ \frac{1}{4} , \frac{3}{4} \right] \right\}
\end{equation*}
has infinitely many elements. 

Note that if $k\geq 2$ and $1\leq r < L_k$, then
\begin{equation*}
    A_r|f|(N_k) = \frac{r}{2r+1} < \frac{L_k}{2L_k+1} = A_{L_k}|f|(N_k).
\end{equation*}
Since $L_k \to +\infty$ as $k \to +\infty$, there exists a positive integer $K$ such that
\begin{equation*}
    A_{L_k}|f|(N_k) = \frac{L_k}{2L_k+1} > \frac{5}{12}
\end{equation*}
for all $k\geq K$. Let $k\geq K$ be an arbitrary but fixed integer. If $N_k - N_{k-1} - L_{k-1} \leq r \leq N_k - N_{2}-1$, then
\begin{equation*}
\begin{aligned}
    A_r|f|(N_k) &\leq \frac{L_k + L_{k-1} + \dots + L_2}{N_k}\\ &\leq \frac{1}{3} \frac{N_k + N_{k-1} + \dots + N_2 }{N_k}\\ &< \frac{1}{3} \left( 1 + \sum_{j=9}^{\infty} \frac{1}{2^j} \right) \\ &<\frac{5}{12},
\end{aligned}
\end{equation*}
where in the first inequality we used the fact that $2r+1 > N_k$ (since $N_k>2N_{k-1}+2L_{k-1}$) and we also used that $N_2+1\leq N_k-r\leq N_k+r< N_{k+1}$, the second inequality follows from the definition of $L_k$, and the third inequality follows from the definition of the $N_k$'s as sparse powers of 2. 
Similarly, if $N_{k+j} - N_k + 1 \leq r \leq N_{k+j} - N_k + L_{k+j}$ for some positive integer $j$, then
\begin{equation*}
    \begin{aligned}
        A_r|f|(N_k) &\leq \frac{L_{k+j} + L_{k+j-1} + \dots + L_2}{N_{k+j}}\\ &\leq \frac{1}{3} \frac{N_{k+j} + N_{k+j-1} + \dots + N_2 }{N_{k+j}}\\ &< \frac{1}{3} \left( 1 + \sum_{i=9}^{\infty} \frac{1}{2^i} \right) \\ &<\frac{5}{12}.
    \end{aligned}
\end{equation*}
Observe that if $r$ does not satisfy any of the above inequalities, it follows that $N_k-r \not\in I_{j_1}$ and $N_k+r \not \in I_{j_2}$ for any positive integers $j_1$ and $j_2$, which implies the existence of $r' \in \mathbb{N}$ such that $A_r|f|(N_k) < A_{r'}|f|(N_k)$. Thus, $A_r |f|(N_k) < A_{L_k} |f|(N_k)$ for all $r \in \mathbb{N} \setminus \{L_k\}$. As a result, $r_{N_k} = L_k$ for every $k\geq K$. Given that 
\begin{equation*}
    \lim_{k\to \infty} \frac{r_{N_k}}{N_k} = \lim_{k\to \infty} \frac{L_k}{N_k} = \frac{1}{3},
\end{equation*}
we conclude that
\begin{equation*}
    N_k \in \left\{ n \in \mathbb{Z} : \frac{r_n}{|n|} \in \left[ \frac{1}{4} , \frac{3}{4} \right] \right\}
\end{equation*}
for infinitely many $k$'s. This concludes the proof of the case $p = +\infty$.

Now we discuss the case $p \in (1,+\infty)$. We choose an $\alpha \in (0,1)$ such that $\alpha p > 1$. Let $N_1 = 2^{\lceil 10/(1-\alpha) \rceil}$, and for each positive integer $k$ let us consider
\begin{equation*}
    N_{k+1} = N_k^{10}, \quad L_k = \left \lfloor \frac{1}{3} N_k \right \rfloor, \quad I_k = [N_k+1, N_k+L_k], \quad n_k = N_k + L_k + 1.
\end{equation*}
Let us consider the function
\begin{equation*}
    f(n) = \frac{1}{n^{\alpha}}\sum_{k=1}^{\infty} \mathds{1}_{I_k}(n),
\end{equation*}
which implies that $f \in \ell^{p}(\mathbb{Z})$. We shall prove that the set
\begin{equation*}
    \left\{ n \in \mathbb{Z} : \frac{r_n}{|n|} \in \left[ \frac{1}{8} , \frac{7}{8} \right] \right\}
\end{equation*}
has infinitely many elements.

Observe that if $k$ is a positive integer and $1\leq r < L_k$, then
\begin{equation*}
\begin{aligned}
    A_r |f|(n_k) &= \frac{1}{2r+1} \sum_{j=0}^{r-1} \frac{1}{(N_k+L_k-j)^{\alpha}} \\ &< \frac{1}{2L_k+1} \sum_{j=0}^{L_k-1} \frac{1}{(N_k+L_k-j)^{\alpha}}\\ &= A_{L_k} |f|(n_k).
\end{aligned}
\end{equation*}
In addition,
\begin{equation*}
    \begin{aligned}
        A_{L_k} |f|(n_k) &= \frac{1}{2L_k+1} \sum_{j=0}^{L_k-1} \frac{1}{(N_k+L_k-j)^{\alpha}}\\ &> \frac{1}{2L_k+1}\int_{N_k+1}^{N_k+L_k} \frac{1}{x^{\alpha}}dx\\ &= \frac{1}{1-\alpha} \frac{(N_k + L_k)^{1-\alpha} - (N_k + 1)^{1-\alpha}}{2L_k+1}.
    \end{aligned}
\end{equation*}
Given that $N_k(2L_k+1)^{-1} \to 3/2$ as $k \to \infty$, there exists a positive integer $K$ such that
\begin{equation*}
    A_{L_k} |f|(n_k) > \frac{7}{5} \frac{1}{1-\alpha} \left( \left( \frac{4}{3} \right)^{1-\alpha} - 1 \right) \frac{1}{N_k^{\alpha}}
\end{equation*}
for every $k\geq K$ (in fact, $7/5$ can be replaced by any constant $1<c<3/2$). Moreover, since $k/N_k^{1-\alpha} \to 0$ as $k \to \infty$, we may assume without loss of generality that
\begin{equation*}
    k < \frac{1}{100} \frac{1}{1-\alpha} \left( \left( \frac{4}{3} \right)^{1-\alpha} - 1 \right) N_k^{1-\alpha}
\end{equation*}
for all $k\geq K$. Let $k\geq K$ be an arbitrary but fixed integer. If 
$n_k - N_{k-1} - L_{k-1} \leq r \leq n_k - N_1 - 1$, then
\begin{equation*}
    \begin{aligned}
        A_r|f|(n_k) &\leq \frac{1}{N_k} \sum_{i=1}^{k}\sum_{j=1}^{L_i} \frac{1}{(N_i+j)^{\alpha}} \\ &\leq \frac{1}{N_k} \sum_{i=1}^{k} \left( 1 + \int_{N_i+1}^{N_i+L_i} \frac{1}{x^{\alpha}}dx \right)\\ &= \frac{k}{N_k} + \frac{1}{1-\alpha} \frac{1}{N_k} \sum_{i=1}^{k} [(N_i + L_i)^{1-\alpha} - (N_i + 1)^{1-\alpha}]\\ &\leq \frac{k}{N_k} + \frac{1}{1-\alpha} \left( \left( \frac{4}{3} \right)^{1-\alpha} - 1 \right)\frac{1}{N_k} \sum_{i=1}^{k} N_i^{1-\alpha}\\ &= \frac{k}{N_k} + \frac{1}{1-\alpha} \left( \left( \frac{4}{3} \right)^{1-\alpha} - 1 \right)\frac{1}{N_k^{\alpha}} \sum_{i=1}^{k} \left(\frac{N_i}{N_k}\right)^{1-\alpha}. 
    \end{aligned}
\end{equation*}
Because of the choice of $N_1$ and the other $N_i$'s, we have that $\sum_{i=1}^{k} \left(\frac{N_i}{N_k}\right)^{1-\alpha} < 1 + \sum_{j=9}^{\infty} \frac{1}{2^j}$, which implies 
\begin{equation*}
    A_r|f|(n_k) < \frac{7}{5} \frac{1}{1-\alpha} \left( \left( \frac{4}{3} \right)^{1-\alpha} - 1 \right) \frac{1}{N_k^{\alpha}} < A_{L_k}|f|(n_k).
\end{equation*}
Moreover, if $N_{k+m} - n_k + 1 \leq r \leq N_{k+m} - n_k + L_{k+m}$ for some positive integer $m$, then
\begin{equation*}
    \begin{aligned}
        A_r|f|(n_k) &\leq \frac{1}{N_{k+m}} \sum_{i=1}^{k+m}\sum_{j=1}^{L_i} \frac{1}{(N_i+j)^{\alpha}} \\ &\leq \frac{1}{N_{k+m}} \sum_{i=1}^{k+m} \left( 1 + \int_{N_i+1}^{N_i+L_i} \frac{1}{x^{\alpha}}dx \right)\\ &= \frac{k+m}{N_{k+m}} + \frac{1}{1-\alpha} \frac{1}{N_{k+m}} \sum_{i=1}^{k+m} [(N_i + L_i)^{1-\alpha} - (N_i + 1)^{1-\alpha}]\\ &\leq \frac{k+m}{N_{k+m}} + \frac{1}{1-\alpha} \left( \left( \frac{4}{3} \right)^{1-\alpha} - 1 \right)\frac{1}{N_{k+m}} \sum_{i=1}^{k+m} N_i^{1-\alpha}\\ &\leq \frac{k+m}{N_{k+m}} + \frac{1}{1-\alpha} \left( \left( \frac{4}{3} \right)^{1-\alpha} - 1 \right)\frac{1}{N_{k+m}^{\alpha}} \left( 1 + \sum_{j=9}^{\infty} \frac{1}{2^j} \right),
    \end{aligned}
\end{equation*}
and thus
\begin{equation*}
    A_r|f|(n_k) < \frac{7}{5} \frac{1}{1-\alpha} \left( \left( \frac{4}{3} \right)^{1-\alpha} - 1 \right) \frac{1}{N_{k+m}^{\alpha}} < A_{L_k}|f|(n_k).
\end{equation*}
As a result, $r_{n_k} = L_k$ for any $k\geq K$. Since
\begin{equation*}
    \lim_{k\to \infty} \frac{r_{n_k}}{n_k} = \lim_{k\to \infty} \frac{L_k}{N_k + L_k + 1} = \frac{1}{4},
\end{equation*}
we deduce that 
\begin{equation*}
    n_k \in \left\{ n \in \mathbb{Z} : \frac{r_n}{|n|} \in \left[ \frac{1}{8} , \frac{7}{8} \right] \right\}
\end{equation*}
for infinitely many $k$'s.
\end{proof}

\section{Appendix} \label{AppendixProof}

\subsection{Proof of the measurability of the uncentered frequency function}

As in the proof of \cite[Theorem 1.1]{TemurContinuous}, we establish the measurability of $\widetilde{r}(x) := \widetilde{r}_x$ by approximating it as the limit of a sequence of measurable functions. More precisely, we construct a sequence of measurable functions $\{\widetilde{r}_{k,l}\}_{k,l\geq 1}$ such that, for each fixed $l \in \mathbb{Z}^+$, the sequence $(\widetilde{r}_{k,l})_{k\geq 1}$ converges pointwise to a function $\widetilde{r}_l$. We then prove that $\widetilde{r}$ is the limit of the sequence $(\widetilde{r}_l)$ as $l \to +\infty$.

For any $k,l \in \mathbb{Z}^+$, let us consider the set
\begin{equation*}
    \widetilde{E}_{f,x,k,l} := \{r \in \mathbb{Q} \cap [2^{-l},2^l] :\text{there is } y \in \mathbb{Q}^d \text{ such that } x \in \overline{B_r(y)} \text{ and } A_r|f|(y) + 2^{-k} \geq \widetilde{M}f(x)\}.
\end{equation*}
Moreover, we define the function
\begin{equation*}
    \widetilde{r}_{k,l}(x) := \begin{cases}
        \inf \widetilde{E}_{f,x,k,l}, &\text{if } \widetilde{E}_{f,x,k,l} \neq \emptyset,\\
        0, &\text{otherwise}.
    \end{cases}
\end{equation*}

Let $k,l \in \mathbb{Z}^+$ and $\alpha \in \mathbb{R}$ be fixed. Let us prove that the set $\widetilde{r}_{k,l}^{-1}([\alpha,+\infty))$ is measurable. Notice that it is enough to consider the case $2^{-l} \leq \alpha \leq 2^{l}$, since $\widetilde{r}_{k,l}^{-1}([\alpha,+\infty))$ equals $\emptyset$, $\mathbb{R}$, or $\widetilde{r}_{k,l}^{-1}([2^{-l},+\infty))$ whenever $\alpha < 2^{-l}$ or $\alpha > 2^l$. For each $r>0$, let us consider 
\begin{equation*}
    \widetilde{S}_r := \{x \in \mathbb{R}^d : \text{there is } y \in \mathbb{Q}^d \text{ such that } x \in \overline{B_r(y)} \text{ and } A_r|f|(y) + 2^{-k} \geq \widetilde{M}f(x)\}.
\end{equation*}
Observe that each one of these sets can be written as
\begin{equation*}
    \widetilde{S}_r = \bigcup_{y \in \mathbb{Q}^d} \{x \in \overline{B_r(y)} : A_r|f|(y) + 2^{-k} \geq \widetilde{M}f(x)\},
\end{equation*}  
which implies that each $\widetilde{S}_r$ is measurable. Since
\begin{equation*}
    \widetilde{r}_{k,l}^{-1}([\alpha,+\infty)) = \left(\bigcup_{r \in [\alpha,2^l] \cap \mathbb{Q}}\widetilde{S}_r\right) \setminus \left(\bigcup_{r \in [2^{-l},\alpha) \cap \mathbb{Q}}\widetilde{S}_r\right),
\end{equation*}
it follows that $\widetilde{r}_{k,l}^{-1}([\alpha,+\infty))$ is measurable. Hence, each function $\widetilde{r}_{k,l}$ is measurable.

Next, notice that 
\begin{equation*}
    \widetilde{E}_{f,x,1,l} \supseteq \widetilde{E}_{f,x,2,l}\supseteq \widetilde{E}_{f,x,3,l}\supseteq \dots.
\end{equation*}
If one of the sets in this chain is empty, then $\widetilde{r}_{k,l}(x) = 0$ for all $k$ sufficiently large. If there is no empty set in the chain, then
\begin{equation*}
    \widetilde{r}_{1,l}(x) \leq \widetilde{r}_{2,l}(x) \leq \widetilde{r}_{3,l}(x) \leq \dots.
\end{equation*}
Since $\widetilde{r}_{k,l}(x) \leq 2^l$ for all $k \in \mathbb{Z}^+$, we have that the sequence $(\widetilde{r}_{k,l}(x))_{k\geq 1}$ converges for all $l \in \mathbb{Z}^+$. As a result, the function
\begin{equation*}
    \widetilde{r}_l(x) := \lim_{k\to \infty}\widetilde{r}_{k,l}(x)
\end{equation*}
is well-defined and is measurable.

Finally, we show that $\widetilde{r}_l(x) \to \widetilde{r}(x)$ as $l \to +\infty$ for every $x \in \mathbb{R}^d$. We shall consider three disjoint cases.
\begin{itemize}
    \item Let us assume that $\widetilde{E}_{f,x} = \emptyset$. Let $l \in \mathbb{Z}^+$ be fixed. Notice that 
    \begin{equation*}
        F = \{(y,r) \in \mathbb{R}^d \times \mathbb{R}^+ : r \in [2^{-l},2^l] \text{ and } x \in \overline{B_r(y)}\}
    \end{equation*}
    is a closed and bounded set. Thus, there exists $(y,r_y) \in F$ such that 
    \begin{equation*}
        \max_{(z,r) \in F} A_r|f|(z) = A_{r_y}|f|(y).
    \end{equation*}
    Since $\widetilde{E}_{f,x} = \emptyset$, it follows that $A_{r_y}|f|(y) < \widetilde{M}f(x)$. Let $K$ be a positive integer such that $2^{-K} < \widetilde{M}f(x) - A_{r_y}|f|(y)$. This implies that $\widetilde{E}_{f,x,k,l} = \emptyset$ for all $k \geq K$. As a result, $\widetilde{r}_l(x) = 0 = \widetilde{r}(x)$ for every $l \in \mathbb{Z}^+$.

    \item Let us assume that $\widetilde{r}(x)>0$. Hence, there exists $y \in \mathbb{R}^d$ such that $x \in \overline{B_{\widetilde{r}(x)}(y)}$ and $A_{\widetilde{r}(x)}|f|(y) = \widetilde{M}f(x)$. Let $l \in \mathbb{Z}^+$ be such that $2^{-l} < \widetilde{r}(x)/2 < 2^{l-1}$. Because of the continuity of the average function, for each $k \in \mathbb{Z}^+$ there exists $\alpha_k < \widetilde{r}(x)$ such that $\alpha_k \in \widetilde{E}_{f,x,k,l}$. This implies that $\widetilde{r}_{k,l}(x) \leq \widetilde{r}(x)$ for all $k \in \mathbb{Z}^+$, and thus $\widetilde{r}_{l}(x) \leq \widetilde{r}(x)$. On the other hand, since $\widetilde{E}_{f,x,k,l} \neq \emptyset$, an application of Bolzano-Weierstrass theorem yields that for each $k \in \mathbb{Z}^+$ there exists $y_k \in \mathbb{R}^d$ such that $x \in \overline{B_{\widetilde{r}_{k,l}(x)}(y_k)}$ and $A_{\widetilde{r}_{k,l}(x)}|f|(y_k) + 2^{-k} \geq \widetilde{M}f(x)$. Applying the Bolzano-Weierstrass theorem once again, there exists $y \in \mathbb{R}^d$ such that $y_k \to y$ as $k\to+\infty$ (possibly passing to a subsequence). Letting $k \to +\infty$, we get $\widetilde{r}_l(x) \in \widetilde{E}_{f,x}$. Hence, $\widetilde{r}_{l}(x) = \widetilde{r}(x)$ for all $l$ sufficiently large.

    \item Let us assume that $\widetilde{r}(x)=0$ and $\widetilde{E}_{f,x} \neq \emptyset$. This means that there exists a sequence $(\rho_n)$ in $\widetilde{E}_{f,x}$ such that $\rho_n \to 0$. We know that for each $n$ there exists $y_n \in \mathbb{R}^d$ such that $x \in \overline{B_{\rho_n}(y_n)}$ and $A_{\rho_n}|f|(y_n) = \widetilde{M}f(x)$. Let $n \in \mathbb{Z}^+$ be fixed, and let $l \in \mathbb{Z}^+$ be such that $2^{-l} < \rho_n/2 < 2^{l-1}$. Applying the continuity of the average operator, we deduce that for each $k$ there exists $\beta_k \in \widetilde{E}_{f,x,k,l}$ such that $\beta_k < \rho_n$. It follows that $\widetilde{r}_{k,l}(x) \leq \rho_n$ for all $k \in \mathbb{Z}^+$, which implies that $\widetilde{r}_{l}(x) \leq \rho_n$ for all $l$ sufficiently large. Therefore, $\limsup_{l\to+\infty}\widetilde{r}_{l}(x) \leq \rho_n$ for all $n \in \mathbb{Z}^+$. Given that $\rho_n\to 0$, we conclude that $\widetilde{r}_l(x) \to 0$ as $l \to +\infty$.
\end{itemize}

\subsection{Proof Sketch of Proposition \ref{prop:Prop1}}

    We start discussing the proof of \eqref{eq:Prop1}. Let $C>1$. For the sake of contradiction, let us assume that \eqref{eq:Prop1} does not hold. Thus, there exist $\varepsilon>0$ and a strictly increasing sequence of positive integers $(N_k)$ such that
    \begin{equation*}
        \frac{\mu(S_{N_k,f})}{N_k^d} \geq \varepsilon
    \end{equation*}
    for all $k \in \mathbb{Z}^+$. We assume without loss of generality that $S_{N_k,f} \setminus S_{N_{k-1},f}$ has positive Lebesgue measure for all $k\geq 2$. For a fixed $x \in S_{N_k,f} \setminus S_{N_{k-1},f}$, there exists $\alpha_x>0$ such that $\alpha_x \leq |x|/C$ and $A_{\alpha_x}|f|(x) \geq Mf(x)/2$. We can assume without loss of generality that $N_1$ is sufficiently large such that
    \begin{equation*}
        Mf(x)\geq A_{2|x|}|f|(x)\geq \frac{1}{C_d2^d|x|^d}\int_{B(0,N_1)}|f(y)|\, dy \geq \frac{\|f\|_1}{C_d2^{d+1}|x|^d}
    \end{equation*}
    for all $x\in S_{N_k,f} \setminus S_{N_{k-1},f}$ with $k\geq 2$, which implies
    \begin{equation} \label{ineqVitali}
        \frac{\|f\|_1}{C_d2^{d+2}|x|^d} \leq \frac{1}{C_d \alpha_x^d} \int_{B_{\alpha_x}(x)}|f(y)| \, dy.
    \end{equation}
    Observe that
    \begin{equation*}
        S_{N_k,f} \setminus S_{N_{k-1},f} \subseteq \bigcup_{x \in S_{N_k,f} \setminus S_{N_{k-1},f}} B_{\alpha_x}(x).
    \end{equation*}
    Moreover, since $\alpha_x \leq |x|/C$, by the triangle inequality, the union of these balls is contained in the annulus
    \begin{equation*}
        U_k = \left\{y \in \mathbb{R}^d : \left( 1 - \frac{1}{C} \right)N_{k-1} \leq |y| \leq \left( 1 + \frac{1}{C} \right)N_k\right\}.
    \end{equation*}
    Hence, we can choose suitable sparse conditions for the sequence $(N_k)$ to guarantee that the annuli $U_{2k}$ are pairwise disjoint. We use the inner regularity of the Lebesgue measure to approximate $S_{N_k,f} \setminus S_{N_{k-1},f}$ by compact sets, for a given compact set $K$ there is a finite union of balls $B_{\alpha_x}(x)$ whose union covers $K$, and we can apply Vitali's covering lemma to find a disjoint sub-collection of these balls such that the union of the dilation of these balls by a factor 3 contains $K$. Combining this with \eqref{ineqVitali} as in the proof of \cite[Theorem 1.3]{TemurContinuous} we obtain a contradiction.\smallskip
    
    Now we discuss the proof of \eqref{LimUncentered} when $X = \mathbb{Z}$. The case $X = \mathbb{R}^d$ can be deduced by applying analogous ideas. Let $C>2$ and $\widetilde{S}_{N,f}^+ = \widetilde{S}_{N,f} \cap \mathbb{Z}^+$. We shall only show that
    \begin{equation}\label{LimUncentered1}
        \lim_{N\to \infty} \frac{\nu(\widetilde{S}_{N,f}^+)}{N} = 0,
    \end{equation}
    as the proof of the corresponding limit for the sets $\widetilde{S}_{N,f}^- = \widetilde{S}_{N,f} \cap \mathbb{Z}^-$ is completely analogous. We proceed via contradiction, let us assume that \eqref{LimUncentered1} does not hold. Hence, there exist $\varepsilon>0$ and a strictly increasing sequence of positive integers $(N_k)$ such that
    \begin{equation*}
        \frac{\nu(\widetilde{S}_{N_k,f}^+)}{N_k} \geq \varepsilon
    \end{equation*}
    for all $k \in \mathbb{Z}^+$. We assume without loss of generality that $\widetilde{S}_{N_k,f}^+ \setminus \widetilde{S}_{N_{k-1},f}^+\neq\emptyset$. For a fixed $n \in \widetilde{S}_{N_k,f}^+ \setminus \widetilde{S}_{N_{k-1},f}^+$, we know that there is a pair of non-negative integers $(\rho_n,s_n)$ such that
    \begin{equation*}
        \widetilde{M}f(n) = \frac{1}{\rho_n + s_n + 1} \sum_{j=-\rho_n}^{s_n} |f(n+j)| \quad \text{and} \quad \widetilde{r}_n = \frac{\rho_n + s_n}{2}.
    \end{equation*}
    Since $2\widetilde{r}_n = \rho_n + s_n \geq \max\{\rho_n,s_n\}$, we have
    \begin{equation} \label{ineqVitali2}
        \widetilde{M}f(n) = \frac{1}{\rho_n + s_n + 1} \sum_{j=-\rho_n}^{s_n} |f(n+j)| \leq \frac{1}{2\widetilde{r}_n + 1} \sum_{j=-2\widetilde{r}_n}^{2\widetilde{r}_n} |f(n+j)|.
    \end{equation}
    Observe that 
    \begin{equation*}
        \widetilde{S}_{N_k,f}^+ \setminus \widetilde{S}_{N_{k-1},f}^+ \subseteq \bigcup_{n \in \widetilde{S}_{N_k,f}^+ \setminus \widetilde{S}_{N_{k-1},f}^+} [n-2\widetilde{r}_n , n+2\widetilde{r}_n] \cap \mathbb{Z}.
    \end{equation*}
  Since each interval in this covering is contained in $[(1-2/C)N_{k-1} , (1+2/C)N_k]$, we get a contradiction after imposing suitable conditions in the sequence $(N_k)$ and using Vitali's covering lemma and \eqref{ineqVitali2} as in the proof of \cite[Theorem 2]{TemurDiscrete}.

\section{Acknowledgements}
J.M. was partially supported by the AMS Stefan Bergman Fellowship and the Simons Foundation MPS-TSM
00007545. 

\bibliographystyle{acm}

\bibliography{bibl}

\end{document}